\newtheorem{theorem}{Theorem}[section]
\newtheorem{corollary}[theorem]{Corollary}
\newtheorem{proposition}[theorem]{Proposition}
\newtheorem{definition}[theorem]{Definition}
\newtheorem{lemma}[theorem]{Lemma}
\newtheorem*{rep@theorem}{\rep@title}
\newcommand{\newreptheorem}[2]{%
\newenvironment{rep#1}[1]{%
 \def\rep@title{#2 \ref{##1}}%
 \begin{rep@theorem}}%
 {\end{rep@theorem}}}
\theoremstyle{definition}
\newtheorem{example}[theorem]{Example}
\newtheorem{remark}[theorem]{Remark}
\newcommand{\Linf}{L$_\infty$}
\newcommand{\MC}{\operatorname{MC}}
\newcommand{\Hom}{\operatorname{Hom}}
\newcommand{\im}{\operatorname{im}}
\newcommand{\homology}{\operatorname{H}}
\title[Minimal models of quantum \Linf-algebras via the BV-formalism]{Minimal models of quantum homotopy Lie algebras via the BV-formalism}
\author{Christopher Braun}
\address{Department of Mathematics and Statistics\\
Lancaster University\\
Lancaster LA1 4YF\\United Kingdom}
\email{c.braun@lancaster.ac.uk}
\author{James Maunder}
\address{Max-Planck-Institut f\"ur Mathematik\\
Vivatsgasse 7\\
53111 Bonn\\
Germany}
\email{maunder@mpim-bonn.mpg.de}
\begin{document}

\begin{abstract}
Using the BV-formalism of mathematical physics an explicit construction for the minimal model of a quantum \Linf-algebra is given as a formal super integral. The approach taken herein to these formal integrals is axiomatic, and they can be approached using perturbation theory to obtain combinatorial formulae as shown in the appendix. Additionally, there exists a canonical differential graded Lie algebra morphism mapping formal functions on homology to formal functions on the whole space. An inverse \Linf-algebra morphism to this differential graded Lie algebra morphism is constructed as a formal super integral.
\end{abstract}

\maketitle

\section*{Introduction}

The Batalin-Vilkovisky (BV-)formalism was originally introduced in physics as a tool to quantise gauge theories and is named after the creators Igor Batalin and Grigori Vilkovisky \cite{batalin_vilkovisky_gauge_algebra_quantization,batalin_vilkovisky_quantization_of_gauge_algebras}. One of the strengths of the BV-formalism is it describes how to deal with certain super path integrals, understood as formal power series using perturbation theory. The BV-formalism has also found success in other fields, leading to many results including: deformation quantisation \cite{cattaneo_felder_deformation}, an alternative description of the graph complex \cite{hamilton_laz_graph}, an alternative proof of the Kontsevich theorem \cite{intro_to_graded_geo_BV_formalism_and_applications}, and manifold invariants \cite{cattaneo_mnev}. All told, the BV-formalism provides a framework in which odd symplectic geometry, homological algebra, and path integrals interact successfully. The geometric formulation of the BV-formalism was pioneered by Khudaverdian, cf.~\cite{khudaverdian_semidensities,khudaverdian_geometry_of_superspace,khudaverdian_BV_and_odd_symp_geo,khudaverdian_nersessian}. A modern formulation of the BV-formalism was given by Schwarz \cite{schwarz}, but it should be noted that there are many papers where the BV-geometry is considered from various standpoints, see \cite{getzler_BV,khudaverdian_semidensities,khudaverdian_voronov,kosmann-schwazbach_monterde,severa} for example. BV-algebras themselves have also been generalised to $\operatorname{BV}_\infty$-algebras: cf.\ \cite{bashkirov,braun_laz_homotopy_BV,homotopy_BV_algebras,kravchenko_defo_BV_algebras}.

The formal geometry of the BV-formalism is well suited to the study of (quantum) \Linf-algebras which can be studied in the same language. Accordingly, this viewpoint is taken herein to construct \emph{minimal models} of quantum \Linf-algebras, see Definition \ref{def_minimal}.

Quantum \Linf-algebras arose in work of Zwiebach \cite{zwiebach} on closed string field theory. They have appeared in work of Markl \cite{markl_loop_homotopy} under the name `loop homotopy algebras', and have appeared in work of the first author joint with Lazarev \cite{braun_laz_unimodular}. Quantum \Linf-algebras are a `higher genus' version of a cyclic \Linf-algebra: the definition is recalled in Section \ref{sec_linf}. One particularly amenable viewpoint of a (quantum) \Linf-algebra structure on a super vector space is a solution to the Maurer-Cartan equation in an appropriate differential graded Lie algebra. The Maurer-Cartan equation is known in physics---more specifically in quantum field theory---as the (quantum or classical) master equation and its use is central to this paper. Indeed, in this language, a quantum \Linf-algebra is a solution to the quantum master equation just as a cyclic \Linf-algebra is a solution to the classical master equation. Maurer-Cartan elements have many uses in mathematics besides defining \Linf-algebra structures: they govern deformation functors \cite{braun,manetti,maunder_koszul_duality,pridham,schlessinger_stasheff_deformation_rational}, in certain cases correspond to morphisms of certain commutative differential graded algebras, and model rational topological spaces \cite{hinich_stacks,andrey_MC,laz_markl,maunder_unbased_rat_homo,quillen}

It is known that minimal models exist for many sorts of homotopy algebras \cite{kadeishvili,kontsevich}. To prove existence and uniqueness of minimal models is usually fairly straightforward. Indeed, in this paper, this is the content of Proposition \ref{prop_filtered_quasi_iso}, which we obtain as a consequence of standard facts concerning Maurer-Cartan moduli sets of differential graded Lie algebras. However, this argument is not at all constructive---one is often concerned not just with the existence of minimal models but also wishes to have explicit formulae to hand.

Explicit formulae for the structure maps of minimal models for $A_\infty$-algebras were given in \cite{kontsevich_soibelman,markl_transferring,merkulov} as sums over trees. A more general approach was taken in \cite{chuang_laz_feynman} where an explicit formula for minimal models, in terms of sums over `stable graphs', for an algebra over the cobar-construction of a differential graded modular operad was constructed.

We take a different approach to deducing formulae for minimal models in the present paper. Indeed, the formulae in terms of stable graphs are reminiscent of those given by perturbative expansions of path integrals using Feynman diagrams and this is the perspective we pursue here. We will show that the minimal model of a quantum \Linf-algebra can be given by a simple explicit integral formula coming from the BV-formalism (Theorem \ref{thm_main}). These sorts of integrals have already been studied in the context of quantum field theory \cite{cattaneo_mnev,costello}. The advantage of this approach is that we obtain a simpler and more conceptual proof of the minimal model formulae. 

In fact, the combinatorial formulae in terms of stable graphs that one obtains using the results of \cite{chuang_laz_feynman} can be deduced from the integral formula given in this paper by standard methods of expansions of path integrals in terms of Feynman diagrams: this is done in Appendix \ref{sec_appendix}.

Recently, minimal models have found applications in theoretical physics. More specifically, minimal models of A$_\infty$-algebras have applications in (open) string field theory and quiver gauge theory \cite{aspinwall_fidowski,aspinwall_katz,kajiuara,lazaroiu,tomasiello}, whereas (quantum) \Linf-algebras and their minimal models have applications in closed string field theory \cite{meunster_sachs_quantum_open-closed,muenster_sachs_homotopy_classification}, note that quantum \Linf-algebras are called `loop homotopy algebras' loc.~cit. Moreover, the approach taken by the BV-formalism has been extended towards applications in non-commutative homotopical algebra \cite{barannikov_matrix_de_rham,barannikov_noncom_BV,barannikov_solving_nocom_BV,hamilton_classes_on_compactifications,hamilton_noncomm_geo_and_compactifications}. This paper is concerned with the finite-dimensional case (and is, therefore, not physical), but the objects and methods used herein frequently appear in physics literature. Indeed, despite not being strictly physical, Cattaneo and Mnev work with a finite-dimensional `toy model' of Chern-Simons field theory \cite{cattaneo_mnev}. The infinite-dimensional case is a somewhat different aspect and is treated in \cite{costello}, for example. It should also be noted that, under certain conditions, infinite dimension algebras can be reduced to equivalent finite dimensional algebras suitable for many purposes. One example of this is in \cite[Theorem 8.3]{braun_laz_unimodular}, which uses a theorem of Lambrechts and Stanley \cite{lambrechts_stanley} rephrased and applied to the context of Chern-Simons theory.'

The paper is organised as follows. Section \ref{sec_formal_geo} recalls results of linear formal odd symplectic geometry. In particular, the master equation (or Maurer-Cartan equation) and the notion of a strong deformation retract from one odd symplectic vector space to another are both recalled. Section \ref{sec_bv} introduces the theory of integration used in this setting (Definition \ref{def_integral}), provides a proof of an analogue of Stokes Theorem (Proposition \ref{prop_BV_stokes}), and discusses the relevant parts of the BV-formalism. Section \ref{sec_linf} recalls details surrounding the theory of (quantum) \Linf-algebras and Proposition \ref{prop_weight_grading} introduces an important and useful filtration. The main result of the paper (Theorem \ref{thm_main}) is contained within Section \ref{sec_minimal_models}. That is, the explicit construction of the minimal model for a given quantum \Linf-algebra via a formal super integral is given in Section \ref{sec_minimal_models}. As a straightforward corollary of Theorem \ref{thm_main}, a minimal model for a harmonic odd cyclic \Linf-algebra can be given via a formal super integral. Section \ref{sec_minimal_models} closes by providing an inverse \Linf-algebra morphism to the differential graded Lie algebra morphism embedding the functions on homology into the space of all functions: this is the content of Theorem \ref{thm_inverse}. The combinatorial approach to formal super integration is briefly discussed in Appendix \ref{sec_appendix}. More precisely, within this appendix those formal super integrals considered throughout the paper are shown to admit a presentation as formal sums over stable graphs: Theorems \ref{thm_disconnected_graphs} and \ref{thm_connected_graphs}. The same result is given in \cite{costello}, but proven by different means. Appendix \ref{sec_appendix} closely follows the argument of \cite{etingof}. Indeed, the ordinary notion of a graph is a special case of a stable graph and, by restricting our formula to usual graphs, one can recover that of \cite{etingof}. A very similar result to Theorem \ref{thm_disconnected_graphs} is given in \cite[Example 3.10]{fiorenza_murri}. Further, in loc.~cit.~the authors explore the relationship between Feynman diagram expansions and graphical calculus of Reshetikhin-Turaev.

\section*{Notations and conventions}

Fix the real numbers, $\mathbb{R}$, as the base field. For technical reasons, the base field is extended to the field of formal Laurent series $\mathbb{R}((\hbar))$ at some points of the paper. All unmarked tensors are assumed to be over the appropriate base field, unless otherwise stated. We will be concerned with the category of differential $\mathbb{Z}/2\mathbb{Z}$-graded vector spaces (`super vector spaces'). Some of the definitions and results contained within this paper could also be made sense of in the $\mathbb{Z}$-graded context once suitable adaptations are made. Working with $\mathbb{Z}$-graded objects is, however, of no benefit to this work and would only complicate matters. Of course, any $\mathbb{Z}$-graded object can be regarded as a $\mathbb{Z}/2\mathbb{Z}$-graded object by remembering only the parity of the grading, i.e.~a $\mathbb{Z}$-graded object $V=\bigoplus_{n\in\mathbb{Z}} V_n$ can be regarded as a $\mathbb{Z}/2\mathbb{Z}$-graded object with $V_{\text{even}}=\bigoplus_{n\in \mathbb{Z}} V_{2n}$ and $V_{\text{odd}}=\bigoplus_{n\in \mathbb{Z}} V_{2n+1}$.

The degree (or parity) of a homogeneous element $v$ in a super vector space is denoted $|v|$. Following well established notation those elements of homogeneous degree $0$ are referred to as even and those of homogeneous degree $1$ are referred to as odd. Accordingly, the dimension of a super vector space is given as $(m|n)$, where $m$ is the dimension (in the non-graded sense) of the subspace of even elements and $n$ is the dimension of the subspace of odd elements. The total dimension of a super vector space of dimension $(m|n)$ is given by $m+n$. A super vector space will be finite-dimensional if, and only if, it is of finite total dimension. The tensor product $V\otimes W$ of super vector spaces $V$ and $W$ has differential defined as $d_{V\otimes W}(v \otimes w)=(d_V v)\otimes w + (-1)^{|v|}v\otimes (d_W w)$ and thus the category of super vector spaces is symmetric monoidal with symmetry isomorphism given by $s(v\otimes w) = (-1)^{|v||w|}w\otimes v$.

Denote by $\Pi \mathbb{R}$ the super vector space of total dimension one (over $\mathbb{R}$) concentrated in odd degree. The functor given by taking a super vector space $V$ to the tensor $V\otimes \Pi \mathbb{R}$ is denoted by $\Pi$ (the super vector space $\Pi V$ is called the \emph{parity reversion} of $V$). Likewise for super vector spaces over $\mathbb{R}((\hbar))$. The notation $\Hom(V, W)$ denotes the super vector space with even part the space of morphisms $V \to W$ (i.e.\ those linear maps preserving the grading) and odd part the space of morphisms $V \to \Pi W$ (i.e.\ those linear maps which reverse the grading). This can be equipped with the differential $df=d_W \circ f - (-1)^{|f|}f \circ d_V$, making it into an internal $\Hom$ functor, and hence the category of super vector spaces is a closed symmetric monoidal category.

In particular, an associative, a commutative, or a Lie algebra is always the appropriate notion in the category of super vector spaces. The expressions `differential (super)graded', `commutative differential graded algebra', and `differential graded Lie algebra' are abbreviated to `dg', `cdga', and `dgla', respectively.

Given a dgla $\mathfrak{g}$ and a cdga $A$, recall the tensor product $\mathfrak{g}\otimes A$ possesses a well defined structure of a dgla: the bracket is given on elementary tensors by $[x\otimes a,y\otimes b]=[x,y]\otimes (-1)^{|a||y|} ab$.

The notion of a pseudo-compact super vector space is used extensively within this text. A pseudo-compact super vector space is one given by an inverse limit of super vector spaces of finite total dimension. As such, a pseudo-compact super vector space is equipped with a topology induced by the inverse limit, and thus all linear maps of pseudo-compact super vector spaces are assumed to be continuous. The dual of a pseudo-compact super vector space is, therefore, the topological dual. This has the luxury of always having $(V^*)^*\cong V$ without any finiteness conditions. Similarly, it will always be the case that $(V \otimes V)^*\cong V^* \otimes V^*$ since the tensor product of two pseudo-compact super vector spaces $A = \varprojlim_i A_i$ and $B = \varprojlim_j B_j$ will always be the completed tensor product, in other words $A \otimes B$ is the pseudo-compact super vector space given by $\varprojlim_{i,j} A_i \otimes B_j$. Similarly, if $V$ is a discrete super vector space and $A = \varprojlim_i A_i$ is a pseudo-compact super vector space, the tensor product $A\otimes V$ is always assumed to mean $\varprojlim_i A_i \otimes V$. More details on pseudo-compact objects can be found in the literature \cite{gabriel,keller_yang,vandenbergh}. In particular, it should be noted that the functor $V\mapsto V^*$ gives a symmetric monoidal equivalence between the category of pseudo-compact super vector spaces and the opposite category of super vector spaces. Thus, for example, a pseudo-compact dg algebra is equivalently a dg coalgebra.

The completed symmetric algebra of a finite-dimensional super vector space $V$ is an example of a pseudo-compact cdga (it is the dual of the cofree dg cocommutative coalgebra on $V$) and appears regularly throughout this paper. Explicitly, the completed symmetric algebra is the pseudo-compact algebra $\hat{S}(V):=\prod_{i=0}^\infty S^i(V)$, which is the direct product of symmetric tensor powers (over either $\mathbb{R}$ or $\mathbb{R}((\hbar ))$) of the super vector space $V$. The symmetric algebra $S(V)$ is a subalgebra of $\hat{S}(V)$.

We will often refer to a pronilpotent dgla (or cdga), meaning an inverse limit of nilpotent dglas (or cdgas). Nilpotent here will mean `global' nilpotence: the descending central series stabilises at zero.

\section*{Statement of results}

For convenience and motivation, we give some definitions and state the main theorems of the paper here.

Let $V$ be a super vector space with an odd symmetric, non-degenerate bilinear form. It is important to note that the non-degenerate bilinear form forces $V$ to be finite-dimensional. A quantum \Linf-algebra structure on $V$ is a Maurer-Cartan element in a certain dgla $\mathfrak{h}[\Pi V]\subset\hat{S}\Pi V^*[|\hbar|]$, see Definition \ref{def_h[W]} and Definition \ref{def_qLinf}. Moreover, there exists a filtered quasi-isomorphism of dglas, $\iota$, that induces a bijection of quantum \Linf-algebra structures on $\homology (V)$ to $V$, see Proposition \ref{prop_filtered_quasi_iso}.

\begin{repdefinition}{def_minimal}
Given a quantum \Linf-algebra structure $(V,m)$, the minimal model of $(V,m)$ is a quantum \Linf-algebra $(\homology(V),m^\prime)$ such that $\iota(m^\prime)$ is homotopic to $m$ (as a Maurer-Cartan element in $\mathfrak{h}[\Pi V]$).
\end{repdefinition}

There exists a canonical isotropic subspace $\mathcal{L}_s\subset \Pi V$ (see Section \ref{sec_decomp}) which can be endowed with a non-degenerate quadratic function $\sigma$, see Remark \ref{rem_non-degenerate_form} and the beginning discussion of Section \ref{sec_integrating_MC_elements}.

\begin{reptheorem}{thm_main}
Given a quantum \Linf-structure $(V,m)$, the integral formula
\[
m^\prime = \hbar \log \int_{ \mathcal{L}_s} e^{\frac{m}{\hbar}}e^{\frac{-\sigma}{2\hbar}}
\]
defines a quantum \Linf-algebra $(\homology(V),m^\prime)$ and, moreover, it is the minimal model of $(V,m)$.
\end{reptheorem}

\begin{reptheorem}{thm_inverse}
Let $A$ be a pseudo-compact cdga. The morphism of sets 
\[
\MC (\mathfrak{h}[\Pi V],A)\to\MC (\mathfrak{h}[\Pi \homology (V)],A)
\]
given by mapping $\sum_{i\in I}f_i\otimes a_i $ to the function given by
\[
\hbar\log\int_{\mathcal{L}_s} e^{\frac{1}{\hbar}\sum_{i\in I} f_i\otimes a_i} e^{\frac{-\sigma}{2\hbar}}.
\]
provides the inverse \Linf-morphism to the filtered quasi-isomorphism $\iota$.
\end{reptheorem}

\section{Formal odd symplectic geometry}\label{sec_formal_geo}

A brief account of formal linear odd symplectic geometry is contained within this section. Recall that an odd symplectic super vector space is a super vector space with an odd bilinear form that is also non-degenerate and skew-symmetric. An odd symplectic vector space is considered as a formal odd symplectic manifold. As such, some of the terminology used here reflects the geometric setting and many of the results stated in terms of super vector spaces have known analogues and generalisations. For a general treatment of the BV-formalism see, for example, \cite{khudaverdian_geometry_of_superspace,khudaverdian_BV_and_odd_symp_geo,khudaverdian_nersessian,schwarz}. The notation $V$ will generally be used to denote a super vector space endowed with an odd symmetric bilinear form and $W$ will generally be used to denote an odd symplectic super vector space. As such, a first example of an odd symplectic vector space is the following: let $\omega$ be an odd symmetric form on $V$ that is also non-degenerate, then an odd symplectic form on $\Pi V$ can be given by the formula $\tau (\Pi x, \Pi y)=(-1)^{|x|}\omega (x,y)$ for all $x,y\in V$.

\subsection{Preliminaries}

The algebra of functions on an odd symplectic vector space, $W$, is given by $\hat{S} W^*$. Note that $W$ is a finite dimensional super vector space (due to the non-degeneracy of the bilinear form) and $\hat{S} W^*$ is pseudo-compact. If $W$ has a differential, say $d$, then $\hat{S} W^*$ possesses both a canonical differential obtained from $d$ (which herein is denoted, by an abuse of notation, also $d$) and an operator of order two corresponding to the odd symplectic form (the Laplacian, see Definition \ref{def_BV_laplacian}). Every odd symplectic vector space is of even total dimension, and there exists a canonical basis $\lbrace x_i ,\xi_j \rbrace_{i,j \in\lbrace 1,\dots,n\rbrace}$ for $W^*$ with $x_i$ even and $\xi_i$ odd such that the odd symplectic form is of canonical form: $\omega = \sum_i d_{DR} x_i d_{DR} \xi_i$, where $d_{DR}$ is the de Rham differential.

\begin{definition}
Given an odd symplectic vector space, a Lagrangian subspace is a maximal isotropic subspace, i.e.~a subspace such that the restriction of the odd symplectic form vanishes and is of maximal total dimension with this property.
\end{definition}

\begin{remark}
A Lagrangian subspace must have total dimension half that of the whole space, and any isotropic subspace extends to a Lagrangian one.
\end{remark}

Lagrangian subspaces serve as convenient subspaces for integration and are particularly important for this paper. More details are contained in Section \ref{sec_integration}.

\begin{definition}\label{def_BV_laplacian}
Let $W$ be an odd symplectic vector space and let $\lbrace x_i,\xi_j \rbrace_{i,j\in \lbrace 1,2,\dots, n\rbrace}$ be a basis for $W^*$. The Laplacian acts on formal functions by
\[
\Delta(g)=\sum^{n}_{i=1} \partial_{x_i} \partial_{\xi_i} g.
\]
\end{definition}

\begin{remark}
The definition of the Laplacian does not depend upon the choice of basis in $W$, but the fact that $W$ is finite-dimensional is crucial. Without the assumption that $W$ is finite-dimensional the definition of the Laplacian operator is substantially more involved, affecting the entire paper; without a well-defined Laplacian operator, the quantum master equation (Section \ref{sec_master_equations}) makes no sense, hence the definition of quantum \Linf-algebra (Section \ref{sec_linf}) makes no sense.
\end{remark}

\begin{definition}\label{def_BV_algebra}
A dg BV-algebra is a unital cdga $A$ with an odd differential operator, $\Delta$, of order $2$ such that:
\begin{itemize}
\item $\Delta^2=0=\Delta (1)$; and
\item $d\Delta + \Delta d =0$,
\end{itemize}
where $d$ is the differential of $A$.
\end{definition}

For more details on dg BV-algebras and their generalisation to BV$_\infty$-algebras see \cite{braun_laz_homotopy_BV}, for example. The choice of notation $\Delta$ for the Laplacian and for the odd differential operator in Definitions \ref{def_BV_laplacian} and \ref{def_BV_algebra} is deliberate as the next proposition shows.

\begin{proposition}\label{prop_BV_alg}
The Laplacian defines a dg BV-algebra structure on $\hat{S} W^*$ with differential $d$ and BV-operator $\Delta$. Hence, $\hat{S} W^*$  has the structure of a dg odd Poisson algebra with differential $d+\Delta$ and odd bracket given by
\[
[x,y]=(-1)^{|x|}\Delta (xy)-(-1)^{|x|}\Delta (x)y - x\Delta(y).
\]
\end{proposition}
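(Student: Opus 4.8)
The plan is to verify directly the two bullet-pointed axioms of Definition \ref{def_BV_algebra} and then obtain the odd Poisson structure as the standard consequence of having a second-order, square-zero operator. That $\hat{S} W^*$ is a unital cdga is already part of the setup, and from the explicit formula $\Delta = \sum_i \partial_{x_i}\partial_{\xi_i}$ two facts are immediate: since $x_i$ is even and $\xi_i$ is odd the composite $\partial_{x_i}\partial_{\xi_i}$ is odd, so $\Delta$ is an odd operator; and being a sum of composites of two derivations, $\Delta$ is a differential operator of order at most two, with $\Delta(1)=0$ trivially.

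For $\Delta^2 = 0$ I would work in the canonical basis that puts $\omega$ in the stated form and write $\Delta^2 = \sum_{i,j}\partial_{x_i}\partial_{\xi_i}\partial_{x_j}\partial_{\xi_j}$; reordering the commuting even derivatives and the anticommuting odd derivatives exhibits this as a sum over $i,j$ of a factor symmetric in $i,j$ (the $\partial_{x_i}\partial_{x_j}$ part) times a factor antisymmetric in $i,j$ (the $\partial_{\xi_i}\partial_{\xi_j}$ part), which vanishes. The genuinely delicate axiom is $d\Delta + \Delta d = 0$, and this is where I expect the main obstacle: the canonical basis that puts $\omega$ in normal form need not be preserved by $d$, so the termwise cancellation used for $\Delta^2$ is unavailable. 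I would instead argue invariantly, using that $\Delta$ is intrinsically the unique order-two operator with $\Delta(1)=0$ that generates, on linear functions, the constant bracket dual to $\omega$; since a differential on $W$ compatible with the odd symplectic form (graded skew-adjoint for $\omega$) preserves this dual pairing, $d$ and $\Delta$ must graded-commute. Concretely I would check that $d\Delta+\Delta d$ vanishes on the linear generators of $\hat{S} W^*$ and then extend to all of $\hat{S} W^*$ using that both $d$ and the graded commutator of $d$ with $\Delta$ interact with the product through the graded Leibniz rule; the compatibility of $d$ with $\omega$ is precisely the hypothesis that makes the linear check succeed.

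For the final clause, the passage from a dg BV-algebra to a dg odd Poisson algebra is the standard Koszul argument, which I would either cite or reprove in the present setting. The bracket displayed in the statement is exactly the deviation of $\Delta$ from being a derivation, so its being a graded biderivation of the product (the odd Leibniz rule) is equivalent to $\Delta$ having order at most two; graded antisymmetry follows from commutativity of the product together with $\Delta(1)=0$; and the graded Jacobi identity is a formal consequence of $\Delta^2=0$. Finally, setting $D = d + \Delta$, the three identities $d^2=0$, $\Delta^2=0$ and $d\Delta+\Delta d=0$ give $D^2=0$, while $d$ is a derivation of the bracket (from $d\Delta+\Delta d=0$ together with the Leibniz rule for $d$) and $\Delta$ is a derivation of the bracket (another standard identity following from order two and $\Delta^2=0$); hence $D$ is a derivation of the bracket, exhibiting $\hat{S} W^*$ with product, bracket, and differential $d+\Delta$ as the claimed dg odd Poisson algebra.
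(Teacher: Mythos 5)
Your overall route---verify the axioms of Definition \ref{def_BV_algebra} directly, then invoke the standard Koszul argument to pass from BV to odd Poisson---is exactly what the paper's two-line proof (``a straightforward check'' plus ``a known fact'') leaves implicit, and most of your details are correct: the parity and order of $\Delta$, the vanishing $\Delta(1)=0$, and the symmetric-times-antisymmetric cancellation giving $\Delta^2=0$ are all fine. The gap is in the step you yourself single out as delicate, $d\Delta+\Delta d=0$. Checking it on linear generators is vacuous: $\Delta$ lowers polynomial degree by two, so for linear $x$ both $\Delta(x)$ and $\Delta(dx)$ vanish, and $(d\Delta+\Delta d)(x)=0$ holds for \emph{any} derivation $d$ whatsoever, with no appeal to the compatibility of $d$ with $\omega$. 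Your claim that this compatibility ``is precisely the hypothesis that makes the linear check succeed'' is therefore misplaced; the content sits one polynomial degree higher.

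The extension step is also not available as stated: $d\Delta+\Delta d$ is a priori a differential operator of order two, not a derivation, so vanishing on generators does not propagate by the Leibniz rule. Commutators of derivations with order-two operators genuinely fail to be derivations in general: on $\mathbb{R}[x,y]$ with $d=x\partial_y$ and $\Delta=\partial_x\partial_y$ one computes $d\Delta-\Delta d=-\partial_y^2$. Indeed, asserting that $d\Delta+\Delta d$ obeys the Leibniz rule is essentially equivalent to the identity you are trying to prove. The repair is to do the check on quadratic elements: for linear $x,y$ one has $d\Delta(xy)=0$ (since $\Delta(xy)$ is a constant), while
\[
\Delta d(xy)=\pm\bigl(\omega^{-1}(dx,y)+(-1)^{|x|}\omega^{-1}(x,dy)\bigr),
\]
where $\omega^{-1}$ denotes the constant pairing on $W^*$ induced by $\omega$; this vanishes precisely because $d$ is graded skew-adjoint for $\omega$---this is where the hypothesis actually enters. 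With that in hand, the deviation of $d\Delta+\Delta d$ from the Leibniz rule is of order one in each argument, hence determined by its values on pairs of linear functions, which are exactly the quantities just shown to vanish; so $d\Delta+\Delta d$ is a derivation vanishing on $W^*$, hence zero on all of $\hat{S}W^*$ (by continuity on the completed algebra). Two smaller points: your uniqueness characterisation of $\Delta$ must also prescribe its values on linear functions (namely zero), since otherwise the operator is unique only up to adding a derivation; and once the anticommutation identity is established, your final paragraph on the odd Poisson structure is standard and agrees with the fact the paper cites.
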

\begin{proof}
It is a straightforward check to show that $\hat{S} W^*$ is a dg BV-algebra. Further, it is a known fact that given a (dg) BV-algebra, the bracket given in the proposition defines the structure of a (dg) odd Poisson algebra.
\end{proof}

For more details regarding BV-algebras and odd Poisson (or Gerstenhaber) algebras see \cite{roger_gerstenhaber}.

\subsection{Master equations}\label{sec_master_equations}
To make sense of some constructions it is necessary to extend the base field from $\mathbb{R}$ to $\mathbb{R}((\hbar))$ and extend super vector spaces from $W$ to $\overline{W}:=W\otimes_{\mathbb{R}} \mathbb{R}((\hbar ))$. The cdga $\hat{S} \overline{W}^*$ is a dg BV-algebra with differential $d$ and BV operator $\hbar\Delta$, where $d$ and $\Delta$ have been extended $\hbar,\hbar^{-1}$-linearly. Hence, $\hat{S} \overline{W}^*$ is a dg odd Poisson algebra with differential $d+\hbar\Delta$. Note that the symmetric tensors here are taken over $\mathbb{R}((\hbar ))$.

For all functions, $f$, in some pronilpotent ideal, we define the formal power series:
\[
e^f=\sum_{n=0}^\infty \frac{f^n}{n!}.
\]
Likewise, $f$ such that $(f-1)$ is in some pronilpotent ideal, we define the formal power series:
\[
\log (f) = \sum_{n=1}^\infty (-1)^{n+1}\frac{(f-1)^n}{n}.
\]
For any $f$ in some pronilpotent ideal we have $\log (e^f)=f$, and for any $g$ such that $(g-1)$ is in some pronilpotent ideal we have $e^{(\log g)}=g$.

\begin{definition}
Let $m= \sum _{i=0}^\infty \hbar^i m_i \in \hat{S} \overline{W}^*$ be an even function with $m_i\in\hat{S} W^*$ for all $i\geq 0$ such that $\frac{m}{\hbar}$ belongs to a pronilpotent ideal of $\hat{S} \overline{W}^*$. The function $m$ is said to satisfy the quantum master equation (QME) if $(d+\hbar\Delta) e^\frac{m}{\hbar}=0$.
\end{definition}

\begin{proposition}
A solution to the QME is equivalent to a solution to the Maurer-Cartan (MC) equation, i.e.~$\forall m\in \hat{S}\overline{W}^*$ such that $\frac{m}{\hbar}$ lies in a pronilpotent ideal
\[
(d+\hbar\Delta) e^\frac{m}{\hbar}=0 \Leftrightarrow (d+\hbar\Delta) m +\frac{1}{2} [m, m]=0.
\]
\end{proposition}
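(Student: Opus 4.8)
The plan is to prove the equivalence by a direct computation relating the operator $(d+\hbar\Delta)$ acting on $e^{m/\hbar}$ to the Maurer-Cartan operator acting on $m$. The key observation is that both the differential $d$ and the BV operator $\hbar\Delta$ interact with the exponential in a controlled way, and the second-order nature of $\Delta$ is precisely what produces the bracket term. First I would compute $(d+\hbar\Delta)e^{m/\hbar}$ explicitly using the formal power series definition of the exponential, working under the standing assumption that $m/\hbar$ lies in a pronilpotent ideal so that all manipulations are legitimate and convergence is automatic.

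The main step is to differentiate term by term. Since $d$ is a first-order derivation of the cdga $\hat{S}\overline{W}^*$, we have $d\,e^{m/\hbar} = \tfrac{1}{\hbar}(dm)\,e^{m/\hbar}$, which contributes the $(dm)/\hbar$ part. The more delicate contribution comes from $\hbar\Delta$, since $\Delta$ is an operator of order two rather than a derivation. The correct identity to invoke here is the standard \emph{deviation from being a derivation}: for any function $f$, one has $\Delta(fg) = \Delta(f)g + (-1)^{|f|}f\Delta(g) + (-1)^{|f|}[f,g]$, which is exactly the defining relation of the bracket from Proposition~\ref{prop_BV_alg} rearranged. Applying this inductively to compute $\Delta(m^n)$, one finds that $\hbar\Delta\,e^{m/\hbar} = \bigl(\Delta m + \tfrac{1}{2\hbar}[m,m]\bigr)e^{m/\hbar}$, where the $\Delta m$ term comes from the genuine second-order action on a single factor and the bracket term arises from the order-two operator acting across two distinct factors of $m$, with the combinatorial factor $\tfrac12$ accounting for the symmetry. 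I expect this computation of $\hbar\Delta\,e^{m/\hbar}$ to be the main obstacle, as the bookkeeping of signs (all $m_i$ are even, so $m$ is even, which simplifies matters) and the combinatorial coefficients must be handled carefully.

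Combining the two contributions yields
\[
(d+\hbar\Delta)e^{m/\hbar} = \Bigl(\tfrac{1}{\hbar}(dm) + \Delta m + \tfrac{1}{2\hbar}[m,m]\Bigr)e^{m/\hbar} = \tfrac{1}{\hbar}\Bigl((d+\hbar\Delta)m + \tfrac12[m,m]\Bigr)e^{m/\hbar}.
\]
Since $e^{m/\hbar}$ is an invertible element of the pseudo-compact algebra (its inverse is $e^{-m/\hbar}$, again well defined because $m/\hbar$ is pronilpotent), the left-hand side vanishes if and only if the scalar coefficient $(d+\hbar\Delta)m + \tfrac12[m,m]$ vanishes. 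This establishes both implications simultaneously and completes the proof. The only points requiring care are verifying that the exponential and its inverse are genuinely well defined in the pronilpotent setting, which is guaranteed by the formal series discussion preceding the statement, and confirming that multiplication by the invertible element $e^{m/\hbar}$ is injective, which is immediate from invertibility.
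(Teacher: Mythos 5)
Your proposal is correct and follows essentially the same route as the paper: both derive the key identity $(d+\hbar\Delta)e^{m/\hbar}=\frac{1}{\hbar}\bigl((d+\hbar\Delta)m+\frac{1}{2}[m,m]\bigr)e^{m/\hbar}$ from the fact that the failure of $\Delta$ to be a derivation is measured by the odd Poisson bracket of Proposition~\ref{prop_BV_alg}, and then conclude by invertibility of the exponential. Your write-up merely makes explicit the inductive computation of $\Delta(m^n)$ and the invertibility of $e^{m/\hbar}$, which the paper leaves implicit in ``the result now follows.''
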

\begin{proof}
The failure of $\Delta$ to be a derivation is measured by the odd Poisson bracket (as in Proposition \ref{prop_BV_alg}) and so one determines the relationship
\[
(d+\hbar\Delta) e^\frac{m}{\hbar}=\frac{1}{\hbar}e^\frac{m}{\hbar}\left((d+\hbar\Delta) m +\frac{1}{2} [m, m]\right).
\]
The result now follows.
\end{proof}

The phrases `solution to the QME' and `MC element' will be used interchangeably. Writing out the MC equation in terms of the expansion $m_0+\hbar m_1 +\hbar^2 m_2 +\dots$ leads to an equivalent system of equations collecting powers of $\hbar$. The first equation $d(m_0) +\frac{1}{2} [m_0, m_0]= 0$ is the classical master equation (CME) for the function $m_0$ (hence $m_0$ defines a cyclic \Linf-algebra), and the second equation $d(m_1) + \Delta (m_0) + [m_0, m_1]=0 $ defines a \emph{unimodular} \Linf-algebra (see \cite{braun_laz_unimodular,granaker}). In \cite{braun_laz_unimodular} the problem of lifting a solution to the CME to a solution of the QME is addressed and unimodularity plays a key role therein.

\subsection{Strong deformation retracts}\label{sec_decomp}

Let $W$ be an odd symplectic super vector space. Given a strong deformation retract (SDR) of $V=\Pi W$ onto some choice of representatives for the homology of $V$, which is, moreover, compatible with the bilinear form in an appropriate way (see below), one arrives at a canonical choice of isotropic/Lagrangian subspace for $W$. The Lagrangian subspace arrived at in this way is used heavily in Section \ref{sec_integration}. This `cyclic' SDR from a space onto its homology is equivalent to that of a Hodge decomposition, cf.~\cite{chuang_laz_feynman,chuang_laz_hodge}. A Hodge decomposition always exists for a finite-dimensional (super) vector space.

\begin{definition}\label{def_SDR}
Let $V$ and $U$ be two super vector spaces, both equipped with odd symmetric bilinear forms. A cyclic SDR from $V$ to $U$ is a pair of even super vector space morphisms $i\colon U\hookrightarrow V$ and $p\colon V\twoheadrightarrow U$ and an odd linear morphism $s\colon V\to V$ such that:
\begin{itemize}
\item $pi=id_U$;
\item $ds+sd=id_V - ip$;
\item $si=0$; $ps=0$; $s^2=0$;
\item $\langle ix , iy \rangle =\langle x, y\rangle $; $\ker (p) \perp \im (i)$; and $\langle sx,y\rangle=(-1)^{|x|}\langle x , sy\rangle$.
\end{itemize}
\end{definition}

\begin{remark}
Forgetting the last condition, concerning the bilinear forms, in Definition \ref{def_SDR} one obtains the usual, well-established, notion of an SDR of super vector spaces. Since we will always assume an SDR is cyclic in this paper we will suppress the adjective cyclic from now on.
\end{remark}

The conditions $si=0$, $ps=0$, and $s^2=0$ are called the side conditions and are not always included; the reason being that they can be imposed at no cost as the following simple proposition, taken from \cite[Lemma B.6.]{braun_laz_unimodular}, shows.

\begin{proposition}
Let $U,V$ be two super vector spaces equipped with odd symmetric bilinear forms and $(i,p,s)$ be morphisms satisfying all the conditions of an SDR except the side conditions, then $s$ can be replaced with a morphism $s'$ in such a way that the triple $(i,p,s')$ is an SDR.
\end{proposition}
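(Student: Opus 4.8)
The plan is to modify $s$ in two successive steps, first arranging $s_1 i = 0$ and $p s_1 = 0$ and then $s'^2 = 0$, while at each stage preserving the homotopy identity and the symmetry condition $\langle sx, y\rangle = (-1)^{|x|}\langle x, sy\rangle$. The two remaining conditions $\langle ix, iy\rangle = \langle x, y\rangle$ and $\ker(p)\perp\im(i)$ involve only $i$ and $p$ and are therefore untouched by any change of $s$. Throughout I write $\pi = ip$, which is a projection ($\pi^2 = \pi$, since $pi = \operatorname{id}_U$) commuting with $d$ because $i$ and $p$ are chain maps. The key preliminary observation is that $\pi$ is self-adjoint for the form: decomposing along $V = \im(i)\oplus\ker(p)$ and using $\langle ix, iy\rangle = \langle x, y\rangle$ together with $\ker(p)\perp\im(i)$ and the graded symmetry of the odd form, one checks $\langle \pi x, y\rangle = \langle x, \pi y\rangle$, so that $Q := \operatorname{id}_V - \pi$ is an even, self-adjoint, idempotent endomorphism commuting with $d$.

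First I would replace $s$ by $s_1 := QsQ$. Since $pQ = 0$ and $Qi = 0$, this immediately yields $ps_1 = 0$ and $s_1 i = 0$. The homotopy identity is preserved because $d$ commutes with $Q$ and $Q$ is idempotent, giving $ds_1 + s_1 d = Q(ds + sd)Q = Q(\operatorname{id}_V - \pi)Q = Q$. Symmetry is preserved because $Q$ is even and self-adjoint, so $\langle s_1 x, y\rangle = \langle sQx, Qy\rangle = (-1)^{|x|}\langle Qx, sQy\rangle = (-1)^{|x|}\langle x, s_1 y\rangle$. Thus $(i,p,s_1)$ satisfies every condition except possibly $s_1^2 = 0$, and moreover $Qs_1 = s_1 = s_1 Q$, equivalently $\pi s_1 = s_1\pi = 0$.

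Next I would set $s' := s_1 d s_1$. That the homotopy identity survives follows from $ds' = (d s_1 d)s_1 = Q d s_1 = d s_1$ and $s'd = s_1 (d s_1 d) = s_1 Q d = s_1 d$, using $d^2 = 0$, $dQ = Qd$ and $Qs_1 = s_1 = s_1 Q$, so that $ds' + s'd = ds_1 + s_1 d = Q$. The side conditions are inherited, since $s'i = s_1 d(s_1 i) = 0$ and $ps' = (ps_1)ds_1 = 0$. For the nilpotence, rewriting $s' = s_1 - s_1^2 d = s_1 - d s_1^2$ (from $ds_1 + s_1 d = Q$ and $s_1 Q = Q s_1 = s_1$) first gives $s_1^2 d = d s_1^2$, and then the homotopy identity yields the identity $s_1^3 d + d s_1^3 = s_1^2$; substituting these into the expansion of $s'^2$ collapses it to $s'^2 = s_1^2 - s_1^3 d - d s_1^3 = 0$.

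The step I expect to require the most care is checking that $s'$ remains symmetric, since the formula $s' = s_1 d s_1$ now involves $d$. Here I would use that the odd form is $d$-invariant, i.e.\ $\langle dx, y\rangle = -(-1)^{|x|}\langle x, dy\rangle$, the compatibility making $d$ skew self-adjoint, which holds for the cyclic structures in play. Combining this with the symmetry of $s_1$ and tracking signs through $\langle s_1 d s_1 x, y\rangle$---moving the outer $s_1$ across (sign $(-1)^{|x|}$ as $|ds_1 x| = |x|$), then $d$, then the remaining $s_1$---the two sign contributions from $s_1$ cancel against the one from $d$, leaving $\langle s' x, y\rangle = (-1)^{|x|}\langle x, s' y\rangle$. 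With symmetry, the homotopy identity, both earlier side conditions, and $s'^2 = 0$ all verified, the triple $(i,p,s')$ is an SDR, completing the argument.
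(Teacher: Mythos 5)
Your proof is correct and takes essentially the same route as the paper's: since $ds+sd=\operatorname{id}_V-ip$, your first replacement $s_1=QsQ$ coincides with the paper's $\tilde{s}=(ds+sd)s(ds+sd)$, and your second replacement $s'=s_1ds_1$ is exactly the paper's $\tilde{s}d\tilde{s}$. You have simply written out in full the ``elementary, yet tedious calculations'' (including the self-adjointness of $ip$ and the sign bookkeeping for the $d$-compatibility of the form) that the paper leaves to the reader.
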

\begin{proof}
If $s$ does not satisfy $si=0$ and $ps=0$, it can be replaced with $\tilde{s}=(ds+sd)s(ds+sd)$. By elementary, yet tedious calculations, the triple $(i,p,\tilde{s})$ now satisfies everything except (possibly) $\tilde{s}^2=0$. Replacing $\tilde{s}$ with $s'=\tilde{s}d\tilde{s}$ means the triple $(i,p,s')$ is an SDR.
\end{proof}

The properties of Definition \ref{def_SDR} ensure that given an SDR of $V$ onto $\homology (V)$, one has a decomposition $V=\im (i)\oplus\im (s)\oplus\im (d)$. Furthermore, one has the orthogonality relations:
\[
\im (i)^{\perp}= \im (s) \oplus \im (d),\quad\im (d)^{\perp}= \im (i) \oplus \im (d),\quad\mathrm{and}\quad\im (s)^{\perp}= \im (i) \oplus \im (s).
\]

Let the bilinear form on $V$ be non-degenerate. The decomposition $V=\homology (V) \oplus \im(s) \oplus \im(d)$ gives rise to a decomposition of the odd symplectic vector space $W=\Pi V$ as $W=\Pi \homology (V) \oplus \Pi \im(s) \oplus \Pi \im(d)$. Therefore, define the subspace $\mathcal{L}_s=\Pi \im(s)$ and notice it is a Lagrangian subspace of $\Pi \im(i)^\perp$.

\begin{proposition}\label{prop_lagrangian_has_even_number_of_odd}
If $\mathcal{L}_s$ is non-zero, then the total dimension of the subspace of odd elements is even.
\end{proposition}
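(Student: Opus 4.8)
The plan is to read the statement as asserting that the odd part of $\mathcal{L}_s=\Pi\im(s)$ is even-dimensional (this is what the label suggests: the Lagrangian has an even number of odd elements). After the parity shift this is the same as saying the \emph{even} part of $\im(s)$ has even dimension, and since the SDR axioms of Definition~\ref{def_SDR} force $s\colon\im(d)\to\im(s)$ to be an odd isomorphism with inverse $d$ (one checks $ds=\mathrm{id}$ on $\im(d)$ and $sd=\mathrm{id}$ on $\im(s)$ using $ds+sd=\mathrm{id}-ip$ together with the side conditions and $d_{\homology(V)}=0$), this equals the dimension of the \emph{odd} part of $\im(d)$. So it suffices to produce a non-degenerate \emph{antisymmetric} bilinear form on the odd part of $\im(d)$, since such a form can only exist in even dimension.

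First I would build a pairing on $\im(d)$ from the data already at hand, rather than from the function $\sigma$ (which is only introduced later). Define $B(u,v)=\langle su,v\rangle$ for $u,v\in\im(d)$. Because $s$ restricts to an isomorphism $\im(d)\to\im(s)$ and, by the orthogonality relations recorded in Section~\ref{sec_decomp}, the form $\langle\cdot,\cdot\rangle$ pairs $\im(s)$ with $\im(d)$ non-degenerately, the form $B$ is non-degenerate. Moreover, as $\langle\cdot,\cdot\rangle$ is odd and $|su|=|u|+1$, the value $B(u,v)$ vanishes unless $|u|=|v|$; hence $B$ is block-diagonal with respect to parity and restricts to a non-degenerate form on each graded piece, in particular on the odd part of $\im(d)$.

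The key step is the symmetry computation. Combining the self-adjointness $\langle sx,y\rangle=(-1)^{|x|}\langle x,sy\rangle$ with the odd symmetry $\langle a,b\rangle=(-1)^{|a||b|}\langle b,a\rangle$, a short sign-chase gives $B(u,v)=(-1)^{|u||v|}B(v,u)$. On odd arguments this reads $B(u,v)=-B(v,u)$, so $B$ restricts to a non-degenerate antisymmetric form on the odd part of $\im(d)$, which therefore has even dimension. Transporting back through the isomorphism $s$ and the parity reversion $\Pi$ identifies this dimension with that of the odd part of $\mathcal{L}_s$, giving the claim.

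The main obstacle I anticipate is purely bookkeeping: pinning down the signs in the symmetry identity and tracking, through both the isomorphism $s$ and the parity shift $\Pi$, exactly which graded summand of $\im(s)$ and $\im(d)$ becomes the odd part of $\mathcal{L}_s$. It is worth stressing that only the self-adjointness of $s$ and the symmetry of $\langle\cdot,\cdot\rangle$ enter—no compatibility of the form with the differential $d$ is needed—and that, up to these identifications, $B$ is precisely the non-degenerate quadratic function $\sigma$ appearing later, so this proposition is exactly the parity constraint that makes the Berezin integration over $\mathcal{L}_s$ in Theorem~\ref{thm_main} well posed.
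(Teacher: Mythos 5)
Your proof is correct and follows essentially the same route as the paper: the paper's own proof defines the non-degenerate form $(x,y)=\langle x,dy\rangle$ directly on $\mathcal{L}_s$ and observes that it is skew-symmetric on the odd coordinates, which forces even dimension, while your form $B(u,v)=\langle su,v\rangle$ on $\im(d)$ is precisely the pullback of that form along the isomorphism $s\colon\im(d)\to\im(s)$, so your transport through $s$ and $\Pi$ is the only (cosmetic) difference. Your more explicit treatment of the non-degeneracy and the sign-chase is accurate, but the key construction and conclusion coincide with the paper's.
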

\begin{proof}
One can define an odd symmetric non-degenerate form $(x,y)=\langle x, dy \rangle$ on $\mathcal{L}_s$. Therefore, the form is skew symmetric when restricted to odd coordinates and---by forgetting the grading---the form defines an odd symplectic form on the super vector space of odd coordinates, hence there must be an even number of odd coordinates.
\end{proof}

\section{The BV formalism}\label{sec_bv}

Here the necessary facts concerning the BV-formalism are recalled. For a more in depth exploration of the BV-formalism and BV-geometry see one of the many resources \cite{batalin_vilkovisky_gauge_algebra_quantization,batalin_vilkovisky_quantization_of_gauge_algebras,costello,khudaverdian_semidensities,khudaverdian_geometry_of_superspace,khudaverdian_BV_and_odd_symp_geo,
khudaverdian_nersessian,intro_to_graded_geo_BV_formalism_and_applications,schwarz,severa}.

\subsection{Integration and BV Stokes' Theorem}\label{sec_integration}

The integration over Lagrangian subspaces defined in this section provides an important tool for Section \ref{sec_minimal_models}. This integration is often only formally defined, but in some cases restricts to the standard definition of (super) integration. The reader who is already comfortable with integrals over (finite-dimensional) super vector spaces may wish to skip this section, or at least the early discussion. It should be noted that although we take an axiomatic approach to integration, the integrals considered here could, alternatively, be evaluated using perturbation theory, cf.~Appendix \ref{sec_appendix}.

Let $V$ be a super vector space of finite total dimension $m$ concentrated entirely in even degree, i.e.~a usual vector space. Let $A$ be a non-degenerate, positive definite bilinear form on $V$. We begin with case of the integral
\[
\int_V f e^{-A} \mu,
\]
where $f$ is some polynomial function on $V$. It is well known how to calculate such integrals: taking a change of coordinates, one diagonalises $A$ and calculates
\[
\int_{\mathbb{R}^m} f(x) e^{(-\sum_{i=1}^m x_i^2)} dx_1\dots dx_m
\]
using integration by parts and the Gaussian integral $\int_{\mathbb{R}} e^{-x^2} dx=\sqrt{\pi}$. Next, we wish to extend to super vector spaces. For an odd coordinate $\xi$ on some super vector space one has
\[
\int_\mathbb{R}1 d\xi=0 {\quad\mathrm{and}\quad} \int_\mathbb{R}\xi d\xi =1.
\]
This completely defines odd integration (since odd coordinates square to zero).

Let $V$ now be a super vector space of dimension $(m|n)$ and $A$ is a non-degenerate, positive definite bilinear form. The integral of a polynomial function, $f$, over $V$ is given by
\[
\int_V f e^{-A} \mu:=\int_{\mathbb{R}^n}\int_{\mathbb{R}^m} f e^{-A} dx_1\dots dx_m d\xi_1\dots d\xi_n.
\]
The full machinery of integration over super manifolds is not necessary in this paper. However, for a more general approach to integration over super manifolds see \cite{khudaverdian_nersessian}.

We now wish to make one final extension to the integrals being presented here. Namely, we wish to no longer make the assumption that $A$ is positive definite. For odd coordinates, no longer having this assumption makes no difference and we again concentrate our attention to the case where $V$ is a super vector space of total dimension $m$ concentrated entirely in even degree. Let $A$ now be a non-degenerate bilinear form on $V$. After a change of variables, one has $A= \sum_{i=1}^{k} x_i^2 - \sum_{i=k+1}^m x_i^2$. For $1\leq i \leq k$ the integral
\[
\int_\mathbb{R} f(x)e^{-x_i^2} dx_i,
\]
where $f$ is some polynomial function, exists. It is, therefore, the case when $k+1\leq i \leq m$ when we have an issue, i.e.~the integral
\[
\int_\mathbb{R} f(x)e^{x_i^2} dx_i,
\]
does not exist using standard integration. To rectify this issue, introduce the function
\[
g_f (t) := \int_{\mathbb{R}^m} f(x) e^{(\sum_{i=1}^{k} x_i^2 
+ \sum_{i=k+1}^m (tx_i)^2)} dx_1\dots dx_m,
\]
defined for all non-zero real numbers. Expanding $g_f (t)$ as a formal power series, one can use analytic continuation to define $g_f$ for all non-zero complex numbers. The original integral is, therefore, equal to $g_f (i)$. For example $\int_{\mathbb{R}} e^{x^2} \mu_x=-i\sqrt{\pi}$. It is important to note these integrals only exist formally.

One can integrate $g_{x_i^{2n}} (t)$ by parts to establish a recursive relation, just like in the positive definite case. Using these two recursive relations and normalising the integrals by dividing through by
\[
\int_V e^{-A} \mu
\]
we make the following elementary definition after setting the scene.

Let $W$ be an exact odd symplectic vector space such that $W=L_1\oplus L_2$, where $L_1$ and $L_2$ are Lagrangian subspaces of $W$ and $d\colon L_1\to L_2$ is an isomorphism. A suitable choice of coordinates $\lbrace x_1,\dots, x_k\rbrace$ on the even part of $L_1$ diagonalises the quadratic function $x\mapsto\langle x,dx \rangle=\sum_{i=1}^{j} x_i^2 - \sum_{i=j+1}^k x_i^2$. Let $\epsilon(x_i)=1$ if $x_i$ belongs to the first sum and $\epsilon(x_i)=-1$ if $x_i$ belongs to the second sum. Similarly, there exists coordinates $\lbrace \xi_{k+1},\dots, \xi_n \rbrace$ on the odd part of $L_1$ such that the quadratic function $\xi\mapsto \langle \xi ,d\xi \rangle= -(\xi_{k+1} \xi_{k+2} + \xi_{k+3} \xi_{k+4} + \dots + \xi_{n-1} \xi_n)$, since $L_1$ has an even number of odd coordinates (cf.\ Proposition \ref{prop_lagrangian_has_even_number_of_odd}). For $i\in\lbrace 1, 3, 5,\dots, n-1\rbrace$, the coordinates $\xi_i$ and $\xi_{i+1}$ will be said to be a pairing of odd coordinates.

\begin{definition}\label{def_integral}
For even $x_i\in L_1$,
\[
\int_\mathbb{R} e^{\frac{-1}{2\hbar}\langle x_i,dx_i \rangle} \mu_{x_i}=1,
\]
and, for all $n\in\mathbb{N}$, the recursive relation
\[
\int_{\mathbb{R}} x_i^{2(n+1)} e^{\frac{-1}{2\hbar}\langle x_i,dx_i \rangle} \mu_{x_i} = \epsilon (x_i) (2n+1)\hbar \int_{\mathbb{R}} {x_i}^{2n} e^{\frac{-1}{2\hbar}\langle x_i,dx_i \rangle} \mu_{x_i}
\]
will be called integration by parts (for even coordinates). For a pairing of odd coordinates, $\xi_i$ and $\xi_{i+1}$,
\begin{align*}
\int_{\mathbb{R}^2} e^{\frac{1}{2\hbar}\xi_i \xi_{i+1}} \mu_{\xi_i} \mu_{\xi_{i+1}}:=1,\quad& \int_{\mathbb{R}^2} \xi_i e^{\frac{1}{2\hbar}\xi_i \xi_{i+1} } \mu_{\xi_i} \mu_{\xi_{i+1}}:=0, \\
\int_{\mathbb{R}^2} \xi_{i+1} e^{\frac{1}{2\hbar}\xi_i \xi_{i+1} } \mu_{\xi_i} \mu_{\xi_{i+1}}:=0,\quad& \;\;\mathrm{and}\;\; \int_{\mathbb{R}^2} \xi_i \xi_{i+1} e^{\frac{1}{2\hbar}\xi_i \xi_{i+1} } \mu_{\xi_i} \mu_{\xi_{i+1}}:=\hbar.
\end{align*}
The integral is extended in the obvious manner to polynomial functions $f\in S \overline{W}^*$ over $L_1$ by extending $\hbar,\hbar^{-1}$-linearly and by setting
\[
\int_{L_1} f e^{\frac{-\sigma}{2\hbar}}  :=
\int_{\mathbb{R}}\dots\int_{\mathbb{R}} \left( f e^{\frac{-\sigma}{2\hbar}}\big|_{L_1}\right) \mu_{x_1}\dots\mu_{x_k}\mu_{\xi_{k+1}}\dots\mu_{\xi_n},
\]
where $\sigma$ is the quadratic function corresponding to $?\mapsto\langle ?,d? \rangle$. This integral is a Laurent polynomial in $\mathbb{R}[\hbar, \hbar^{-1}]$, because the integrand is restricted to $L_1$ and the integral is taken over all the coordinates of $L_1$.
\end{definition}

\begin{remark}
In Definition \ref{def_integral} integration by parts is defined for even coordinates. There is no integration by parts defined for odd coordinates, because odd coordinates square to zero.
\end{remark}

\begin{remark}\label{rem_non-degenerate_form}
The above argument for even variables in the case where the bilinear form is not necessarily positive definite is known (in a different guise) as the Wick rotation. The non-degeneracy of $\langle x,dx \rangle$ on $L_1$ means that the integration is against a `Gaussian measure', dealing with convergence issues for even variables, although these integrals need only exist formally: see \cite{hamilton_laz_graph}.
\end{remark}

\begin{proposition}
The integrals of Definition \ref{def_integral} do not depend upon the choice of coordinates.
\end{proposition}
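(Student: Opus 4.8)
The plan is to identify the formal integral of Definition \ref{def_integral} with a normalised Gaussian expectation, and then observe that such an expectation is manifestly coordinate-free because Wick's theorem determines it entirely from the covariance, which is an intrinsic object attached to the quadratic function $\sigma$ rather than to any choice of basis. First I would treat the even and odd coordinates separately and check that the recursive relations defining the integral are exactly the relations satisfied by the moments of a Gaussian. For a single even coordinate $x_i$ the integration-by-parts recursion together with the base case forces $\int_\mathbb{R} x_i^{2m} e^{\frac{-1}{2\hbar}\langle x_i,dx_i\rangle}\mu_{x_i}=(2m-1)!!\,(\epsilon(x_i)\hbar)^m$ and annihilates all odd moments; for a pairing of odd coordinates $(\xi_i,\xi_{i+1})$ the four prescribed values are precisely the moments of the corresponding odd (Berezinian) Gaussian. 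Extending multiplicatively over all the coordinates of $L_1$, the integral of an arbitrary monomial is then the sum over all complete pairings (Wick contractions) of its factors, each contraction contributing the relevant entry of the covariance $C$, where $C$ is $\hbar$ times the inverse of the matrix of $\sigma|_{L_1}$.

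The second step is to record that this Wick-contraction description is intrinsic. The polynomial $f$ lies in $S \overline{W}^*$, i.e.\ in $S(L_1^*)$ extended $\hbar,\hbar^{-1}$-linearly, the function $\sigma$ is an element of $S^2(L_1^*)$, and its non-degeneracy---which holds on all of $L_1$ because $d\colon L_1\to L_2$ is an isomorphism, the odd block being put into standard symplectic form using the even number of odd coordinates guaranteed by Proposition \ref{prop_lagrangian_has_even_number_of_odd}---produces a well-defined inverse covariance $C$, a graded-symmetric bivector (symmetric on the even block, skew on the odd block). The Gaussian expectation is the coordinate-free operation sending $f$ to its total contraction against powers of $C$. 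Since under any linear change of coordinates on $L_1$ the form $\sigma$ transforms by the contragredient of the transformation acting on $C$, every Wick contraction is invariant; in particular any two coordinate systems that both bring $\sigma$ into the normal form of Definition \ref{def_integral} yield the same value. Along the way I would note that Sylvester's law of inertia makes the even signature $(j,k-j)$, hence the collection of signs $\epsilon(x_i)$, a basis-independent invariant, so that the prescribed normal form is genuinely canonical.

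The main obstacle is the bookkeeping in the first step: rigorously matching the formally-defined integral---built from the one-variable recursion together with the Wick-rotation/analytic-continuation of $g_f(t)$ for indefinite even blocks---with the Gaussian-moment formula, keeping the signs $\epsilon(x_i)$, the Koszul signs across the even and odd sectors, and the $\hbar$-powers consistent. In the positive-definite even case this is the classical fact that normalised Gaussian integrals are coordinate independent, following from the cancellation of Jacobian factors against the built-in normalisation $\int_\mathbb{R} e^{\frac{-1}{2\hbar}\langle x_i,dx_i\rangle}\mu_{x_i}=1$; the indefinite case then follows because $g_f(t)$ is assembled from genuine, coordinate-independent integrals for each real $t$ and the analytic continuation to $t=i$ is unique. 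Once the integral is known to compute the intrinsic Wick expectation, coordinate independence is immediate from the second step, so essentially all of the work is in this verification.
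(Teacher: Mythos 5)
Your proof is correct, but it takes a genuinely different route from the paper's. The paper's own proof is a one-line observation: because of the (hidden) normalisation $\int_{L_1} e^{\frac{-\sigma}{2\hbar}}=1$ built into Definition \ref{def_integral}, each integral is really a ratio of two Gaussian integrals, so under a linear change of coordinates the Jacobian factors in numerator and denominator cancel. You instead identify the recursively defined integral with the intrinsic Wick expectation: the base cases together with the integration-by-parts rule and the odd-pairing values force exactly the Gaussian moments, so the integral of any monomial is the sum over complete pairings contracted against $C=\hbar\,(\sigma|_{L_1})^{-1}$, a basis-free (graded-symmetric) tensor; coordinate independence is then manifest, with Sylvester's law of inertia guaranteeing that the signs $\epsilon(x_i)$ are themselves intrinsic. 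This buys something the paper's argument does not: it is self-contained at the purely formal level (no appeal to change-of-variables for Wick-rotated integrals, whose justification the paper leaves implicit), and it essentially establishes Wick's theorem, which the paper only proves later (Theorem \ref{thm_wick} in Appendix \ref{sec_appendix}) as the engine of the graph expansion; conversely, the paper's argument is much shorter and defers all combinatorics. One caveat: your side remark that $g_f(t)$ is assembled from \emph{coordinate-independent} integrals for each real $t$ is not quite right, since the deformed form $\sum_{i\le k}x_i^2+t^2\sum_{i>k}x_i^2$ depends on the chosen diagonalising coordinates; only the normalised values after continuation to $t=i$ agree. But this remark is inessential to your argument: Definition \ref{def_integral} is purely formal, and your Wick identification stands on its own.
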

\begin{proof}
The (hidden) normalisation by 
\[
\int_{L_1} e^\frac{-\sigma}{2\hbar}
\]
in Definition \ref{def_integral} causes the integrals to be the ratio of two integrals and, thus, they do not depend upon the choice of coordinates.
\end{proof}

An analogue of Stokes' Theorem is now recalled after a couple of auxiliary results. The more general original result is found in \cite{schwarz}. For an alternative proof using the usual exterior calculus in the linear case see \cite{hamilton_laz_graph}.

\begin{proposition}\label{prop_integral_of_derivative}
Let $\mathcal{L}\subset W$ be a Lagrangian subspace such that $\sigma(y)=\langle y,dy \rangle$ is non-degenerate. For $x_i$ even
\[
\int_{\mathbb{R}} \partial_{x_i} (x_i^m e^{\frac{-1}{2\hbar}\sigma(x_i)}) \mu_{x_i} =0.
\]
For a pairing of odd coordinates, $\xi_i$ and $\xi_{i+1}$, and $f\in SW^*$ any polynomial, for $j\in\lbrace i,i+1 \rbrace$
\[
\int_{\mathbb{R}^2} \partial_{\xi_j} (f e^{\frac{1}{2\hbar}\xi_i \xi_{i+1} }) \mu_{\xi_i} \mu_{\xi_{i+1}} =0.
\]
\end{proposition}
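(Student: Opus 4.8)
The plan is to prove both identities by reducing each to the one-variable (resp.\ one-pair) defining data of Definition \ref{def_integral} and exhibiting an exact cancellation. Since the integral factorises over the chosen coordinates and each integrand involves only the single even coordinate $x_i$ (resp.\ the single odd pair $\xi_i,\xi_{i+1}$), it suffices to work with that coordinate alone, treating all remaining coordinates as constants.

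For the even case I would first carry out the product rule. Writing $\sigma(x_i)=\langle x_i,dx_i\rangle=\epsilon(x_i)x_i^2$, one has
\[
\partial_{x_i}\left(x_i^m e^{\frac{-1}{2\hbar}\sigma(x_i)}\right)=\left(m\,x_i^{m-1}-\frac{\epsilon(x_i)}{\hbar}x_i^{m+1}\right)e^{\frac{-1}{2\hbar}\sigma(x_i)}.
\]
I would then split on the parity of $m$. If $m=2n+1$ is odd, both surviving powers $x_i^{2n}$ and $x_i^{2(n+1)}$ are even, and the integration-by-parts recursion of Definition \ref{def_integral}, namely $\int_{\mathbb{R}}x_i^{2(n+1)}e^{\frac{-1}{2\hbar}\sigma(x_i)}\mu_{x_i}=\epsilon(x_i)(2n+1)\hbar\int_{\mathbb{R}}x_i^{2n}e^{\frac{-1}{2\hbar}\sigma(x_i)}\mu_{x_i}$, makes the two contributions cancel identically once one uses $\epsilon(x_i)^2=1$. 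If $m$ is even, then both $x_i^{m-1}$ and $x_i^{m+1}$ are odd powers; each integrates to zero against the even Gaussian weight, by symmetry (or, in the indefinite case, directly from the analytic-continuation definition of the even integrals). In both parities the integral vanishes.

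For the odd case the computation is finite, since $\xi_i$ and $\xi_{i+1}$ square to zero, so that $e^{\frac{1}{2\hbar}\xi_i\xi_{i+1}}=1+\frac{1}{2\hbar}\xi_i\xi_{i+1}$ and any $f$ may be written as $f=a+b\xi_i+c\xi_{i+1}+e\,\xi_i\xi_{i+1}$ with $a,b,c,e$ free of $\xi_i,\xi_{i+1}$. I would apply the odd derivation $\partial_{\xi_j}$ to $fe^{\frac{1}{2\hbar}\xi_i\xi_{i+1}}$, using $\partial_{\xi_i}\xi_i=1$ and $\partial_{\xi_i}(\xi_i\xi_{i+1})=\xi_{i+1}$ together with the analogous relations for $j=i+1$, and then regroup the result into the basis $\{1,\xi_i,\xi_{i+1},\xi_i\xi_{i+1}\}$ multiplied by $e^{\frac{1}{2\hbar}\xi_i\xi_{i+1}}$. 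Evaluating term-by-term with the defining values of Definition \ref{def_integral} then produces the required cancellation. The conceptual reason is that $\partial_{\xi_j}$ strictly lowers the $\xi_j$-degree, so the coefficient that survives the Berezin integral and comes from differentiating $f$ is exactly offset by the contribution coming from differentiating the exponential factor.

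I expect the main obstacle to be bookkeeping rather than ideas: in the even case one must line up the combinatorial coefficients produced by the product rule with the constants in the integration-by-parts recursion, and in the odd case one must track the Koszul signs generated by the odd derivation and by the ordering of the Berezin measures, so that the two surviving terms cancel rather than reinforce. No step requires more than Definition \ref{def_integral}, together with Proposition \ref{prop_lagrangian_has_even_number_of_odd}, which guarantees that the odd coordinates genuinely organise into the pairs on which the odd integral is defined.
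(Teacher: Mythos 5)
Your treatment of the even coordinates is correct and is essentially the paper's own proof: compute the derivative, and for $m=2n+1$ odd the integration-by-parts recursion of Definition \ref{def_integral} together with $\epsilon(x_i)^2=1$ gives the cancellation
\[
(2n+1)\left(1-\epsilon(x_i)^2\right)\int_{\mathbb{R}}x_i^{2n}e^{\frac{-1}{2\hbar}\sigma(x_i)}\mu_{x_i}=0;
\]
for $m$ even only odd powers of $x_i$ survive the product rule, and their integrals vanish, which, as you say, is the content of the ``obvious'' extension of the definition (the underlying Gaussian and Wick-rotated integrals of odd monomials are identically zero). This half closes, and it also fills in the $m$-even case that the paper's one-line proof leaves implicit.

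The odd case is where your argument, as written, would fail. You defer the evaluation, asserting that the defining values of Definition \ref{def_integral} ``produce the required cancellation''; with the values as printed they do not. Take $f=\xi_i$ and $j=i$. Then $\xi_i e^{\frac{1}{2\hbar}\xi_i\xi_{i+1}}=\xi_i$, so
\[
\partial_{\xi_i}\left(\xi_i e^{\frac{1}{2\hbar}\xi_i\xi_{i+1}}\right)=1=\left(1-\frac{1}{2\hbar}\xi_i\xi_{i+1}\right)e^{\frac{1}{2\hbar}\xi_i\xi_{i+1}},
\]
and therefore, using the four defining values and $\hbar$-linearity,
\[
\int_{\mathbb{R}^2}\partial_{\xi_i}\left(\xi_i e^{\frac{1}{2\hbar}\xi_i\xi_{i+1}}\right)\mu_{\xi_i}\mu_{\xi_{i+1}}=1-\frac{1}{2\hbar}\cdot\hbar=\frac{1}{2}\neq 0.
\]
In your own terms: the contribution from differentiating $f$ is $1$, the contribution from differentiating the exponential is $-\frac{1}{2\hbar}\int_{\mathbb{R}^2}\xi_i\xi_{i+1}e^{\frac{1}{2\hbar}\xi_i\xi_{i+1}}\mu_{\xi_i}\mu_{\xi_{i+1}}$, and these offset exactly if and only if that last integral equals $2\hbar$, whereas Definition \ref{def_integral} declares it to be $\hbar$. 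The value $2\hbar$ is the one forced by consistency with Berezin--Gaussian integration (equivalently, one keeps the value $\hbar$ but writes the quadratic weight as $e^{\frac{1}{\hbar}\xi_i\xi_{i+1}}$), and it is what Proposition \ref{prop_BV_stokes}, Wick's Theorem \ref{thm_wick}, and everything downstream actually require. So the discrepancy is best read as a normalisation typo in the paper rather than as evidence that the proposition is false; but your proof, whose only nontrivial content in the odd case is precisely this bookkeeping, needed to carry the evaluation out---at which point the factor of $2$ surfaces and must be resolved, either by correcting the defining value or the exponent. As submitted, the decisive step is asserted rather than checked, and taken at face value the assertion is incorrect.
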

\begin{proof}
In the even case, compute the partial derivative and when $m$ is odd integrate by parts. The odd case is immediate from the calculation.
\end{proof}

\begin{proposition}\label{prop_BV_stokes}
Let $\mathcal{L}\subset W$ be a Lagrangian subspace such that $\sigma(y)=\langle y,dy \rangle$ is non-degenerate and let $f\in S W^*$ be any polynomial, then
\[
\int_{\mathcal{L}} \Delta \left(f e^{\frac{-\sigma}{2\hbar}}\right) =0.
\]
\end{proposition}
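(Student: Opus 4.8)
The plan is to choose coordinates adapted simultaneously to $\mathcal{L}$ and to the Laplacian, and then reduce the statement term by term to the one-variable identities already established in Proposition \ref{prop_integral_of_derivative}. Recall from the discussion preceding Definition \ref{def_integral} that a Lagrangian $\mathcal{L}$ on which $\sigma$ is non-degenerate fits into a polarisation $W=\mathcal{L}\oplus\mathcal{L}'$ by two Lagrangian subspaces with $d\colon\mathcal{L}\to\mathcal{L}'$ an isomorphism. The symplectic form then pairs $\mathcal{L}$ perfectly with $\mathcal{L}'$, so any basis of $\mathcal{L}$ completes to a Darboux basis of $W$ whose conjugate partners lie in $\mathcal{L}'$. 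I would take the basis of $\mathcal{L}$ to be the $\sigma$-diagonalising coordinates $x_1,\dots,x_k$ (even) and $\xi_{k+1},\dots,\xi_n$ (odd) of Definition \ref{def_integral}, so that for each conjugate pair exactly one coordinate is tangent to $\mathcal{L}$ (and equals one of the integration variables) while its partner is transverse and vanishes on $\mathcal{L}$. As both the Laplacian (the remark after Definition \ref{def_BV_laplacian}) and the integral (its coordinate-independence) are insensitive to this choice, the computation may be carried out in the adapted basis.

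In these coordinates $\Delta=\sum_i\partial_{\mathrm{tan},i}\,\partial_{\mathrm{trans},i}$, the sum running over the conjugate pairs and each term written with the tangent derivative outermost; this ordering is harmless since each pair contains one even coordinate, which commutes freely with its odd partner. It therefore suffices to show that each summand integrates to zero. Writing $\omega=fe^{-\sigma/2\hbar}$ and using that a derivative along a coordinate of $\mathcal{L}$ commutes with restriction to $\mathcal{L}$, one obtains $(\partial_{\mathrm{tan},i}\partial_{\mathrm{trans},i}\omega)|_{\mathcal{L}}=\partial_{\mathrm{tan},i}\big((\partial_{\mathrm{trans},i}\omega)|_{\mathcal{L}}\big)$. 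By the Leibniz rule (with the usual Koszul signs) $(\partial_{\mathrm{trans},i}\omega)|_{\mathcal{L}}=\big[(\partial_{\mathrm{trans},i}f)|_{\mathcal{L}}-\tfrac{1}{2\hbar}f|_{\mathcal{L}}\,(\partial_{\mathrm{trans},i}\sigma)|_{\mathcal{L}}\big]e^{-\sigma|_{\mathcal{L}}/2\hbar}$, which is a polynomial times the diagonalised Gaussian weight, because $\partial_{\mathrm{trans},i}\sigma$ is linear and the exponential restricts to $e^{-\sigma|_{\mathcal{L}}/2\hbar}$. Thus each summand is the integral over $\mathcal{L}$ of an $\mathcal{L}$-tangent total derivative of a polynomial times the Gaussian; this vanishes by the even case of Proposition \ref{prop_integral_of_derivative} when the tangent coordinate is some $x_i$, and by the odd case when it is some $\xi_i$. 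Summing over the pairs yields $\int_{\mathcal{L}}\Delta(fe^{-\sigma/2\hbar})=0$.

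The part I expect to require the most care is the reduction itself rather than any single estimate: one must verify that the chosen Darboux basis really does place exactly one coordinate of each conjugate pair on $\mathcal{L}$ with its partner transverse, and that transverse differentiation preserves the class of polynomials times the \emph{fixed} Gaussian weight, so that Proposition \ref{prop_integral_of_derivative} applies term by term. Once these structural points are settled, the remaining work — factoring the Gaussian over the coordinates in order to invoke the single-variable identities, and tracking the Koszul signs in the odd Leibniz rule — is routine and does not affect the vanishing.
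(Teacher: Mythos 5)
Your proposal is correct and follows essentially the same route as the paper, whose proof is simply the remark that the statement ``follows from computation and Proposition \ref{prop_integral_of_derivative}'': your adapted Darboux coordinates, the splitting of each term of $\Delta$ into a transverse derivative (which preserves the class of polynomials times the restricted Gaussian) followed by a tangent derivative, and the term-by-term appeal to Proposition \ref{prop_integral_of_derivative} constitute exactly that computation, written out in full.
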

\begin{proof}
It follows from computation and Proposition \ref{prop_integral_of_derivative}.
\end{proof}

\subsection{Integrating solutions to the QME}\label{sec_integrating_MC_elements}

Section~\ref{sec_integration} assumes the vector space in question is acyclic, i.e.~$\homology (W)=0$, leading to the existence of a Lagrangian subspace on which $\langle x, dx \rangle$ is non-degenerate, see Remark \ref{rem_non-degenerate_form}. In general, the odd symplectic vector space in question may have non-trivial homology and thus we must now consider this more general case. Recall the decomposition
\[
W=\Pi \im (i) \oplus \Pi \im(i)^\perp
\]
given in Section \ref{sec_decomp} of the finite-dimensional super vector space $W$. Clearly, $\homology (\Pi \im(i)^\perp)=0$, so the preceding results can be applied to $\Pi \im(i)^\perp$. Moreover, recall that one has a canonical choice of Lagrangian subspace $\mathcal{L}_s\subset\Pi \im(i)^\perp$.

\begin{definition}
Given an odd symplectic vector space $W=H(W)\oplus \Pi \im(i)^\perp$, the integral of a polynomial $f\in S\overline{W}^*$ over $\mathcal{L}_s$ is given by $(id_{H(W)}\otimes \int_{\mathcal{L}_s}) (f)\in S(\overline{H(W)}^*)$.
\end{definition}

So far integration has been defined for arbitrary polynomials $f\in S\overline{W}^*$. We will need to integrate certain infinite series in $\hat{S}\overline{W}^*$, however this raises the issue of convergence. For instance, the integral $ \int_\mathbb{R}\left (\sum_{k=1}^\infty \frac{x^{2k}}{h^k} \right ) e^{\frac{-1}{2\hbar}x^2}\mu_x$ does not converge. It will be necessary to be able to integrate exponentials of the form $e^{\frac{f}{\hbar}}$ where $f$ is a formal power series belonging to a certain subspace of $\hat{S}W^*[|\hbar|]$. It turns out that integrals converge when integrating functions of this exponential form, as is now explained.

\begin{definition}\label{def_h[W]}
Introduce the weight grading on the cdga $\hat{S} W^* [|\hbar |]$ by requiring that for a monomial $f\in \hat{S} W^*$ of degree $n$, the element $f\hbar^g$ has weight $2g + n$. Let $\mathfrak{h}[W]$ be the subspace of $\hat{S} W^* [|\hbar |]$ containing those elements of weight grading $> 2$.
\end{definition}

\begin{remark}
Not only is $\mathfrak{h}[W]$ important because it defines a subspace of $\hat{S} W^* [|\hbar |]$ where we can make sense of integration, but it is important because it is where quantum \Linf-algebra structures on $\Pi W$ are defined: see Definition \ref{def_qLinf}.
\end{remark}

\begin{proposition}\label{prop_weight_grading}
For all $i\geq 1$ let $F_i$ be the subspace of $\mathfrak{h}[W]$ given by all vectors of weight grading $\geq i-2$. The filtration $\lbrace F_i \rbrace_{i\geq 1}$ is Hausdorff and complete.\qed
\end{proposition}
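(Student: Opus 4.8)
The plan is to make the weight grading completely explicit and then read off both properties as formal consequences of the resulting product decomposition. For each integer $k$ write $\mathfrak{h}[W]_k$ for the homogeneous subspace of weight $k$, namely the span of those $f\hbar^g$ with $f\in S^n W^*$ and $2g+n=k$. Because $W$ is finite-dimensional each symmetric power $S^n W^*$ is finite-dimensional, and for a fixed $k$ only finitely many pairs $(g,n)$ of non-negative integers satisfy $2g+n=k$; hence every $\mathfrak{h}[W]_k$ is finite-dimensional. Since $\hat{S}W^*$ is the direct product of the $S^n W^*$ and $\hat{S}W^*[|\hbar|]$ is a formal power series ring in $\hbar$, an arbitrary element of $\mathfrak{h}[W]$ is exactly a formal sum $\sum_{k\geq 3} a_k$ with $a_k\in\mathfrak{h}[W]_k$ unconstrained; that is, $\mathfrak{h}[W]=\prod_{k\geq 3}\mathfrak{h}[W]_k$. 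Under this identification $F_i=\prod_{k\geq \max(3,\,i-2)}\mathfrak{h}[W]_k$ is simply the tail consisting of components of weight at least $i-2$ (so, in particular, $F_1=\dots=F_5=\mathfrak{h}[W]$, the minimal weight being $3$).

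For the Hausdorff property I would argue that if $a=\sum_{k\geq 3} a_k$ lies in $\bigcap_{i\geq 1} F_i$, then membership in $F_i$ forces every component of weight strictly less than $i-2$ to vanish; letting $i\to\infty$ annihilates every homogeneous component, so $a=0$ and $\bigcap_i F_i=0$.

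For completeness I would compute the quotients directly: $\mathfrak{h}[W]/F_i$ is the finite truncation $\bigoplus_{3\leq k< i-2}\mathfrak{h}[W]_k$, with the transition maps in the resulting tower being the evident coordinate projections. The inverse limit of this tower of truncations is precisely the unrestricted product $\prod_{k\geq 3}\mathfrak{h}[W]_k$, and the canonical comparison map $\mathfrak{h}[W]\to\varprojlim_i \mathfrak{h}[W]/F_i$ is the identity on this product; hence the filtration is complete.

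There is no genuine obstacle here: the content lies entirely in correctly identifying $\mathfrak{h}[W]$ with a product of finite-dimensional weight pieces and in tracking the indexing shift of $F_i$. The single point demanding care is the finite-dimensionality of each $\mathfrak{h}[W]_k$, which rests on $W$ being finite-dimensional (forced by non-degeneracy of the form); it is exactly this finiteness that lets the two completions defining $\hat{S}W^*[|\hbar|]$ recombine, after regrouping by weight, into the single product $\prod_{k\geq 3}\mathfrak{h}[W]_k$ whose tails are the $F_i$.
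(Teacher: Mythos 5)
Your proof is correct and matches what the paper intends: the proposition is stated there without proof, treated as immediate. Your identification $\mathfrak{h}[W]=\prod_{k\geq 3}\mathfrak{h}[W]_k$ (using that only finitely many pairs $(g,n)$ satisfy $2g+n=k$), with each $F_i$ a tail of this product so that Hausdorffness and completeness follow formally, is precisely the standard verification the authors leave to the reader.
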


It is clear the cdga $\mathfrak{h}[W]$ inherits the structure of an odd Poisson algebra from $\hat{S} W^*[|\hbar |]$. Further, it is clear that $\mathfrak{h}[W]$ is pronilpotent, whereas $\hat{S} W^*[|\hbar |]$ is not.

The following is an elementary observation.

\begin{proposition}\label{prop_fixed_weight_grading}
Integration over an even coordinate or a pairing of odd coordinates fixes the weight grading.
\qed
\end{proposition}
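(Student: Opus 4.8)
The plan is to read off weight-homogeneity directly from the integration rules of Definition~\ref{def_integral}, after reducing to a single even coordinate or a single odd pairing. By linearity of the integral it suffices to treat one monomial at a time; in the even case I would write it as a scalar multiple of $\hbar^{g} x_i^{a} u$, where $u$ is a monomial in the remaining coordinates, and analogously as a scalar multiple of $\hbar^{g}\xi_i^{b}\xi_{i+1}^{b'} u$ in the odd case. The first observation is that, since $\sigma$ is a sum of degree-two terms, the element $\sigma/\hbar$ is weight-homogeneous of weight $2+2\cdot(-1)=0$ (cf.\ Definition~\ref{def_h[W]}); hence the Gaussian factor $e^{-\sigma/2\hbar}$ has weight $0$ and factors as a product over the coordinates. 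Consequently integrating over $x_i$ (respectively over the pairing $\xi_i,\xi_{i+1}$) acts only on the corresponding factor while leaving $\hbar^{g}$ and the factor $u$ untouched, so it is enough to track the weight change produced by the single-variable rules.

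For an even coordinate, the odd moments vanish (as for any Gaussian integral), which preserves the weight trivially, while for the even moments I would iterate the integration-by-parts relation to obtain
\[
\int_{\mathbb{R}} x_i^{2k} e^{\frac{-1}{2\hbar}\langle x_i, dx_i\rangle}\mu_{x_i}=\epsilon(x_i)^{k}(2k-1)!!\,\hbar^{k}.
\]
Each application of the relation replaces a factor $x_i^{2}$ by a factor $\hbar$; as $x_i^{2}$ has weight $2$ and $\hbar$ has weight $2$, every step preserves the weight, and the base case $\int_{\mathbb{R}}e^{\frac{-1}{2\hbar}\langle x_i, dx_i\rangle}\mu_{x_i}=1$ is of weight $0$ on both sides. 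Thus $x_i^{2k}$, of weight $2k$, is sent to a scalar multiple of $\hbar^{k}$, again of weight $2k$.

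For a pairing of odd coordinates I would simply inspect the four defining values: the constant $1$ (weight $0$) integrates to $1$; each single generator $\xi_i$ or $\xi_{i+1}$ integrates to $0$; and the top monomial $\xi_i\xi_{i+1}$ (weight $2$) integrates to $\hbar$ (weight $2$). In each case the weight is unchanged, and combining these with the factorisation of the first paragraph gives the claim.

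Since the result is called an elementary observation, there is no genuine obstacle; the only point needing care is the bookkeeping that the Gaussian factor is of weight $0$ and factors coordinate-wise, so that partial integration in one variable does not disturb the weights of the untouched factors. Granting this, the statement follows immediately from the explicit rules, which trade each of $x_i^{2}$ and $\xi_i\xi_{i+1}$ for a factor $\hbar$, all three being of weight $2$.
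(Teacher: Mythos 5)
Your proof is correct: the paper states this proposition as an elementary observation with no written proof, and your verification is exactly the intended one, namely that the rules of Definition~\ref{def_integral} trade each factor $x_i^2$ or $\xi_i\xi_{i+1}$ (weight $2$) for a factor of $\hbar$ (weight $2$), while vanishing integrals preserve weight trivially and untouched factors are unaffected because $\sigma$ is diagonal in the chosen coordinates. Your explicit formula $\int_{\mathbb{R}} x_i^{2k} e^{\frac{-1}{2\hbar}\langle x_i, dx_i\rangle}\mu_{x_i}=\epsilon(x_i)^{k}(2k-1)!!\,\hbar^{k}$ and the observation that odd moments vanish (consistent with Theorem~\ref{thm_wick}) are accurate bookkeeping of the paper's recursion.
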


\begin{proposition}\label{prop_integration_is_power_series}
For $f\in\mathfrak{h}[W]$, the formal Laurent series $f^\prime$ given by
\[
f^\prime= \hbar\log \left( \int_{\mathcal{L}_s} e^{\frac{f}{\hbar}} e^{\frac{-\sigma}{2\hbar}} \right)
\]
converges and consists of non-negative powers of $\hbar$ only, i.e.\ it is a formal power series in $\hbar$. Moreover, $f^\prime\in\mathfrak{h}[\homology (W)]$.
\end{proposition}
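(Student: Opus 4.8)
The plan is to exploit the weight grading of Definition \ref{def_h[W]} throughout, reducing the infinite integral to a convergent sum of finite-dimensional Gaussian integrals and then controlling the powers of $\hbar$ by a connectedness (genus) count.

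First I would establish convergence. Since $f\in\mathfrak{h}[W]$ has weight $>2$, i.e.\ weight $\geq 3$, dividing by $\hbar$ lowers every weight by $2$, so $\frac{f}{\hbar}$ has weight $\geq 1$ and consequently $e^{\frac{f}{\hbar}}=1+X$ where $X=\sum_{k\geq 1}\frac{1}{k!}(\frac{f}{\hbar})^{k}$ has weight $\geq 1$. Because $W$ is finite-dimensional, each symmetric power $S^{n}(W^{*})$ is finite-dimensional, and hence the weight-$w$ component of $\hat{S}W^{*}[|\hbar|]$ is finite-dimensional; in particular the weight-$w$ part $P_{w}$ of $e^{\frac{f}{\hbar}}$ is an honest polynomial, as only the finitely many $k\leq w$ contribute. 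The quadratic function $\sigma$ has weight $2$, so $e^{\frac{-\sigma}{2\hbar}}$ is homogeneous of weight $0$, and therefore the weight-$w$ part of the integrand is exactly $P_{w}e^{\frac{-\sigma}{2\hbar}}$, a polynomial times the Gaussian, which is integrable by Definition \ref{def_integral}. By Proposition \ref{prop_fixed_weight_grading} each such integral again has weight $w$, and by the completeness and Hausdorffness of the weight filtration (Proposition \ref{prop_weight_grading}) the sum $\sum_{w}\int_{\mathcal{L}_s}P_{w}e^{\frac{-\sigma}{2\hbar}}$ converges; this is the value of $\int_{\mathcal{L}_s}e^{\frac{f}{\hbar}}e^{\frac{-\sigma}{2\hbar}}$ as a function on $\homology(W)$.

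Next I would locate the integral inside $1+(\text{weight}\geq 1)$. The weight-$0$ part of $e^{\frac{f}{\hbar}}$ is just the constant $1$, so the weight-$0$ part of the integral is $\int_{\mathcal{L}_s}e^{\frac{-\sigma}{2\hbar}}=1$ by the normalisation built into Definition \ref{def_integral}. Thus $\int_{\mathcal{L}_s}e^{\frac{f}{\hbar}}e^{\frac{-\sigma}{2\hbar}}=1+Y$ with $Y$ of weight $\geq 1$; hence $Y$ lies in the pronilpotent ideal of positive weight (Proposition \ref{prop_weight_grading}) and $\log(1+Y)$ is defined, with weight $\geq 1$. Multiplying by $\hbar$, which raises weight by $2$, shows $f^{\prime}=\hbar\log(1+Y)$ has weight $\geq 3>2$. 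Consequently $f^{\prime}\in\mathfrak{h}[\homology(W)]$ as soon as we know $f^{\prime}$ is a power series in $\hbar$.

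The main obstacle is precisely this last point: individual weight components of the unlogged integral genuinely contain negative powers of $\hbar$ (a single cubic vertex already produces an $\hbar^{-1}$ term), so the non-negativity of the powers of $\hbar$ in $f^{\prime}$ must come from the logarithm. To see it I would expand the Gaussian integral by Wick's theorem, writing $\int_{\mathcal{L}_s}e^{\frac{f}{\hbar}}e^{\frac{-\sigma}{2\hbar}}$ as a sum over stable graphs whose vertices carry the monomials of $f$ (a vertex of $\hbar$-order $g_{v}$ contributing $\hbar^{g_{v}-1}$ through $e^{\frac{f}{\hbar}}$) and whose edges are the propagator $\sigma^{-1}$ (each contributing $\hbar^{+1}$). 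Taking $\log$ retains only the connected graphs, and for a connected graph $\Gamma$ with $V$ vertices and $E$ edges the total power of $\hbar$ in $\hbar\log(\cdots)$ is $\sum_{v}g_{v}+(E-V+1)=\sum_{v}g_{v}+b_{1}(\Gamma)\geq 0$, since every $g_{v}\geq 0$ and the first Betti number $b_{1}(\Gamma)$ of a connected graph is non-negative. This genus count, which is the standard loop expansion of an integral carrying a $\frac{1}{\hbar}$ in the exponent, shows $f^{\prime}$ has only non-negative powers of $\hbar$; the precise combinatorics of the stable-graph expansion, together with the linked-cluster theorem identifying $\log$ with the connected sum, are exactly those developed in Appendix \ref{sec_appendix}. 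Combined with the weight bound of the previous step, this yields $f^{\prime}\in\mathfrak{h}[\homology(W)]$, completing the proof.
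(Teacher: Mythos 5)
Your proposal is correct and follows essentially the same route as the paper: the non-negativity of the $\hbar$-powers is obtained from the connected stable-graph (genus) expansion of Appendix \ref{sec_appendix} (Theorem \ref{thm_connected_graphs}), and membership in $\mathfrak{h}[\homology(W)]$ from the weight-preservation of integration (Proposition \ref{prop_fixed_weight_grading}). The paper's proof is just a two-line citation of these facts; your additional weight-graded convergence argument and the explicit count $\sum_v g_v + b_1(\Gamma) \geq 0$ merely spell out details the paper leaves implicit.
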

\begin{proof}
The first statement follows immediately from Appendix \ref{sec_appendix} and, in particular, Theorem \ref{thm_connected_graphs}. The second statement now follows from the first statement and Proposition \ref{prop_fixed_weight_grading}.
\end{proof}

\begin{lemma}\label{lem_qBV}
Let $m\in\mathfrak{h}[W]$ be a solution to the QME, then $m^\prime\in\mathfrak{h}[\homology (W)]$ given by
\[
m^\prime=\hbar \log \left(\int_{\mathcal{L}_s} e^{\frac{m}{\hbar} } e^{\frac{-\sigma}{2\hbar}} \right)
\]
satisfies the QME.\qed
\end{lemma}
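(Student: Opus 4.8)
The plan is to recast the statement as a \emph{chain-map} property of the integration functional and then read off the conclusion. Membership $m'\in\mathfrak{h}[\homology(W)]$ is already supplied by Proposition \ref{prop_integration_is_power_series}, so the only remaining content is that $m'$ solves the QME, i.e.\ that $(d+\hbar\Delta)e^{m'/\hbar}=0$ on $\homology(W)$. Since $\homology(W)$ carries the zero differential, this reduces to $\hbar\Delta_{\homology(W)}\,e^{m'/\hbar}=0$. Writing $I(f):=\int_{\mathcal{L}_s} f\,e^{\frac{-\sigma}{2\hbar}}$ for the (partial) integration over $\mathcal{L}_s$, one has $e^{m'/\hbar}=I\!\left(e^{m/\hbar}\right)$ by the very definition of $m'$ via $\hbar\log$. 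Thus it suffices to prove that $I$ is a morphism of complexes from $(\hat{S}\overline{W}^*,d+\hbar\Delta)$ to $(\hat{S}\overline{\homology(W)}^*,\hbar\Delta)$: granting this, $\hbar\Delta\,e^{m'/\hbar}=\hbar\Delta\,I(e^{m/\hbar})=I\!\left((d+\hbar\Delta)e^{m/\hbar}\right)=I(0)=0$, since $m$ solves the QME.

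First I would exploit the decomposition $W=\homology(W)\oplus\Pi\im(i)^\perp$ of Section \ref{sec_decomp}. The orthogonality relations there show this is a splitting into \emph{symplectically orthogonal} subspaces, so one may choose Darboux coordinates adapted to it; the Laplacian then splits with no cross terms as $\Delta=\Delta_{\homology}+\Delta^{\perp}$. Because $\homology(W)$ consists of $d$-closed representatives and $d$ preserves $\Pi\im(i)^\perp$ (mapping $\Pi\im(s)$ into $\Pi\im(d)$ and killing $\Pi\im(d)$), the differential acts only on the $\Pi\im(i)^\perp$-coordinates. Hence, as operators on $\hat{S}\overline{W}^*$,
\[
d+\hbar\Delta=\hbar\Delta_{\homology}+\left(d+\hbar\Delta^{\perp}\right),
\]
where the first summand involves only $\homology(W)$-variables and the second only $\Pi\im(i)^\perp$-variables. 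The chain-map property now breaks into two pieces. Since $\sigma$ and all integration variables lie in $\Pi\im(i)^\perp$, the operator $\hbar\Delta_{\homology}$ commutes with $I$, giving $\hbar\Delta_{\homology}\,I=I\,\hbar\Delta_{\homology}$. It therefore remains only to show that $I$ annihilates the second summand, namely
\[
\int_{\mathcal{L}_s}\left(d+\hbar\Delta^{\perp}\right)\!(f)\,e^{\frac{-\sigma}{2\hbar}}=0 .
\]

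This vanishing is the heart of the matter and I would obtain it from BV Stokes' Theorem (Proposition \ref{prop_BV_stokes}) via the identity
\[
\hbar\Delta^{\perp}\!\left(f\,e^{\frac{-\sigma}{2\hbar}}\right)=\left(d\,f+\hbar\Delta^{\perp}f\right)e^{\frac{-\sigma}{2\hbar}},
\]
whose left-hand side integrates to zero over the Lagrangian $\mathcal{L}_s$. To prove this identity I would expand the order-two operator $\hbar\Delta^{\perp}$ using the odd Poisson bracket of Proposition \ref{prop_BV_alg}, together with the biderivation property $[f,e^{\phi}]=[f,\phi]\,e^{\phi}$ and the formula $\Delta(e^{\phi})=\bigl(\Delta\phi+\tfrac12[\phi,\phi]\bigr)e^{\phi}$ for $\phi=\frac{-\sigma}{2\hbar}$. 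This produces a bracket term $-\tfrac12[f,\sigma]$ and a scalar correction $f\bigl(-\tfrac12\Delta^{\perp}\sigma+\tfrac{1}{8\hbar}[\sigma,\sigma]\bigr)$. Two facts make everything collapse to the desired form: the differential is Hamiltonian, $d=\tfrac12[\sigma,-]$ (the standard statement that a linear symplectic vector field is generated by its quadratic Hamiltonian $\tfrac12\sigma=\tfrac12\langle y,dy\rangle$), which turns the bracket term into $d\,f$; and the correction vanishes, since $\Delta^{\perp}\sigma=0$ and $[\sigma,\sigma]=0$. The latter two hold because $\sigma$ depends only on the $\mathcal{L}_s$-coordinates---indeed $\langle y,dy\rangle$ reduces to $\langle y_1,dy_1\rangle$ for $y_1\in\im(s)$, the cross terms dying by $d^2=0$ and the Lagrangian property---so that each term $\partial_{x_i}\partial_{\xi_i}$ of $\Delta^{\perp}$ differentiates $\sigma$ along a conjugate coordinate lying in $\Pi\im(d)$, while $[\sigma,\sigma]=0$ follows from $d^2=0$ via the graded Jacobi identity.

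The step I expect to be the main obstacle is not the algebra of the key identity but its interface with convergence and signs. Proposition \ref{prop_BV_stokes} is stated for \emph{polynomial} integrands, whereas here $f=e^{m/\hbar}$ is a genuine power series in $\mathfrak{h}[W]$; I would therefore need to apply the identity and Stokes termwise to the truncations of $e^{m/\hbar}$ and pass to the limit, justified by the Hausdorff-complete pronilpotent filtration of Proposition \ref{prop_weight_grading}, the weight-preserving property of Proposition \ref{prop_fixed_weight_grading}, and the convergence already established in Proposition \ref{prop_integration_is_power_series}. Pinning down the precise signs in the order-two Leibniz expansion against the conventions of Proposition \ref{prop_BV_alg}, and confirming that the Hamiltonian relation $d=\tfrac12[\sigma,-]$ carries exactly the normalisation appearing in the exponent $\frac{-\sigma}{2\hbar}$, is the remaining delicate bookkeeping; everything else is a direct computation.
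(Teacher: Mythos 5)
Your proof is correct and follows essentially the same route as the paper: there the lemma is deduced from Proposition \ref{prop_A_linear_BV}, whose proof rests on exactly your ingredients --- the splitting $\Delta=\Delta_{\Pi \im(i)}+\Delta_{\Pi \im(i)^\perp}$, the commutation of $\Delta_{\Pi \im(i)}$ with integration, BV Stokes (Proposition \ref{prop_BV_stokes}), and the convergence statement of Proposition \ref{prop_integration_is_power_series}. Your explicit identity $\hbar\Delta^{\perp}\bigl(f\,e^{\frac{-\sigma}{2\hbar}}\bigr)=\bigl(df+\hbar\Delta^{\perp}f\bigr)e^{\frac{-\sigma}{2\hbar}}$, obtained from $d=\tfrac12[\sigma,-]$, $\Delta^{\perp}\sigma=0$ and $[\sigma,\sigma]=0$, is precisely the computation the paper leaves implicit in Proposition \ref{prop_laplacian_inside_integral} (whose stated formula silently absorbs the $d$-term in this way), so your write-up is a more detailed rendering of the same argument.
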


The proof is suppressed here as it is a corollary of Proposition \ref{prop_A_linear_BV} given later. Lemma \ref{lem_qBV} is well known in various guises: see \cite{cattaneo_mnev,costello,krotov_losev}. In fact, in loc. cit. the lemma includes an additional statement: if the Lagrangian subspace $\mathcal{L}_s$ is perturbed by a small amount a homotopic solution to the QME is achieved. This latter statement is a corollary of Theorem \ref{thm_main}.

\subsection{Homotopy of solutions to the QME}

In this section, we show that integration respects the notion of homotopy between MC elements. The notion of a homotopy between MC elements in a fixed dgla is standard and is equivalent to gauge equivalence for pronilpotent dglas: see \cite{braun_laz_unimodular,schlessinger_stasheff_deformation_rational,voronov_non-abelian}. Since the dglas considered within this paper are pronilpotent, this section will make the assumption that all dglas are pronilpotent. Given a dgla $\mathfrak{g}$, let $\MC(\mathfrak{g})$ denote the solutions to the MC equation in $\mathfrak{g}$.

\begin{definition}\hfill
\begin{itemize}
\item Let $\mathbb{R}[t,dt]$ be the free cdga over $\mathbb{R}$ with generators $t$ and $dt$ (even and odd respectively) subject to the condition $d(t)=dt$. Clearly there exist two evaluation maps $|_0 , |_1 \colon \mathbb{R}[t,dt]\to \mathbb{R}$ defined by setting $t$ to $0$ and $1$, respectively.
\item For some dgla $\mathfrak{g}$, let $\mathfrak{g}[t,dt]$ be the dgla given by $\mathfrak{g}\otimes \mathbb{R}[t,dt]$. Clearly there exist two evaluation maps $|_0 , |_1 \colon\mathfrak{g}[t,dt]\to \mathfrak{g}$ defined by setting $t$ to $0$ and $1$, respectively.
\end{itemize}
\end{definition}

\begin{definition}
Let $\mathfrak{g}$ be a dgla. $\xi,\eta\in\MC(\mathfrak{g})$ are said to be homotopic if there exists $H(t)\in\MC(\mathfrak{g}[t,dt])$ such that $H(0)=\xi$ and $H(1)=\eta$.

The Maurer-Cartan moduli set, denoted $\mathcal{MC}(\mathfrak{g})$, is the set of equivalence classes of $\MC(\mathfrak{g})$ under the homotopy relation.
\end{definition}

For an odd symplectic vector space $W$, consider a homotopy $H(t)\in\mathfrak{h}[W][t,dt]$. Clearly, $H(t)=A(t)+B(t)dt$. Further, there exists the equivalent exponential form
\[
e^{\frac{H(t)}{\hbar}} = e^{\frac{A(t)}{\hbar}} + \frac{1}{\hbar}e^{\frac{A(t)}{\hbar}}B(t)dt.
\]

We can extend Definition \ref{def_integral} $A$-linearly:

\begin{definition}\label{def_A_linear_integration}
Let $A$ be a cdga. Given an element $\sum_i f_i\otimes a_i\in S W^* \otimes A$, let
\[
\int_{\mathcal{L}_s} \left(\sum_i f_i \otimes a_i \right) e^{\frac{-\sigma}{2\hbar}} := \sum_i \left(\int_{\mathcal{L}_s} f_i e^{\frac{-\sigma}{2\hbar}} \right) \otimes a_i.
\]
\end{definition}

\begin{proposition}\label{prop_laplacian_inside_integral}
If $W=\Pi \im(i)\oplus \Pi \im(i)^\perp$ is an orthogonal decomposition, then the Laplacian splits $\Delta=\Delta_{\Pi \im(i)}+\Delta_{\Pi \im(i)^\perp}$. Moreover, given an element $\sum_i f_i\otimes a_i\in S W^* \otimes A$,
\[
\Delta_{\Pi \im (i)} \int_{\mathcal{L}_s} \left(\sum_i f_i \otimes a_i \right) e^{\frac{-\sigma}{2\hbar}} = \int_{\mathcal{L}_s} \Delta \left(\sum_i f_i \otimes a_i \right) e^{\frac{-\sigma}{2\hbar}}
\]
\end{proposition}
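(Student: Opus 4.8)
The plan is to derive both assertions from the BV-Stokes theorem (Proposition \ref{prop_BV_stokes}) together with the orthogonality of the decomposition. For the splitting of the Laplacian, I would argue as follows. Since $W=\Pi\im(i)\oplus\Pi\im(i)^\perp$ is orthogonal with respect to the odd symplectic form, the form is block-diagonal and each summand is itself an odd symplectic subspace; one may therefore choose a Darboux basis of $W^*$ that is the disjoint union of Darboux bases of $(\Pi\im(i))^*$ and of $(\Pi\im(i)^\perp)^*$. Because the form pairs each coordinate only with its conjugate inside the same summand, every conjugate pair $\{x_i,\xi_i\}$ lies entirely in one factor, so grouping the defining sum $\Delta=\sum_i\partial_{x_i}\partial_{\xi_i}$ according to the two factors yields $\Delta=\Delta_{\Pi\im(i)}+\Delta_{\Pi\im(i)^\perp}$. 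The basis-independence of $\Delta$ (the remark following Definition \ref{def_BV_laplacian}) guarantees this is well defined.

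For the integral identity I would first extend Proposition \ref{prop_BV_stokes} $A$-linearly via Definition \ref{def_A_linear_integration}. The key input is that $\Pi\im(i)^\perp$ is acyclic, that $\mathcal{L}_s=\Pi\im(s)$ is a Lagrangian subspace of it, and that $\sigma=\langle\,\cdot,d\,\cdot\,\rangle$ is non-degenerate there (Section \ref{sec_decomp}); hence Proposition \ref{prop_BV_stokes} applies verbatim to the factor $\Pi\im(i)^\perp$ with its Laplacian $\Delta_{\Pi\im(i)^\perp}$, giving $\int_{\mathcal{L}_s}\Delta_{\Pi\im(i)^\perp}\!\big((\sum_i f_i\otimes a_i)\,e^{-\sigma/2\hbar}\big)=0$. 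Applying $\Delta$ to the whole integrand $f\,e^{-\sigma/2\hbar}$ (with $f=\sum_i f_i\otimes a_i$) and using the splitting from the first part, the $\Delta_{\Pi\im(i)^\perp}$-contribution drops out, leaving $\int_{\mathcal{L}_s}\Delta\big(f\,e^{-\sigma/2\hbar}\big)=\int_{\mathcal{L}_s}\Delta_{\Pi\im(i)}\big(f\,e^{-\sigma/2\hbar}\big)$.

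It then remains to move $\Delta_{\Pi\im(i)}$ outside the integral. For this I would use that $\Delta_{\Pi\im(i)}$ differentiates only coordinates of $\Pi\im(i)$, whereas both the integration $\int_{\mathcal{L}_s}$ and the Gaussian $e^{-\sigma/2\hbar}$ involve only coordinates of $\Pi\im(i)^\perp$ (recall $\mathcal{L}_s\subset\Pi\im(i)^\perp$ and $\sigma$ is built from $\langle\,\cdot,d\,\cdot\,\rangle$ on that factor). Consequently $\Delta_{\Pi\im(i)}$ treats $e^{-\sigma/2\hbar}$ as a constant and commutes past the integral sign, so $\int_{\mathcal{L}_s}\Delta_{\Pi\im(i)}\big(f\,e^{-\sigma/2\hbar}\big)=\int_{\mathcal{L}_s}(\Delta_{\Pi\im(i)}f)\,e^{-\sigma/2\hbar}=\Delta_{\Pi\im(i)}\int_{\mathcal{L}_s}f\,e^{-\sigma/2\hbar}$, which is the asserted formula. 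I expect the main obstacle to be precisely this bookkeeping of variables: one must verify that the integration coordinates of $\mathcal{L}_s$ and the quadratic $\sigma$ are genuinely disjoint from the $\Pi\im(i)$-directions acted on by $\Delta_{\Pi\im(i)}$, and that the Gaussian is kept inside the Laplacian when invoking BV-Stokes for the $\Pi\im(i)^\perp$-piece (for the $\Pi\im(i)$-piece its placement is immaterial, since $\Delta_{\Pi\im(i)}$ annihilates $\sigma$).
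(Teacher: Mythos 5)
Your proof is correct and takes essentially the same route as the paper's: the paper's (very terse) proof cites precisely your three ingredients---the splitting of the Laplacian, Proposition \ref{prop_BV_stokes} applied with the acyclic factor $\Pi\im(i)^\perp$ to kill the $\Delta_{\Pi\im(i)^\perp}$-contribution, and the fact that $\Delta_{\Pi\im(i)}$ commutes with restriction and integration because it is composed of partial derivatives in the $\Pi\im(i)$-coordinates only. Your closing observation that the Gaussian must be kept inside the Laplacian when invoking BV--Stokes (while its placement is immaterial for the $\Pi\im(i)$-piece) is exactly the right reading of the statement and makes explicit a point the paper leaves implicit.
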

\begin{proof}
The first statement is immediate. The second statement follows from three facts: the observation $\Delta_{\Pi \im(i)}$ commutes with integration and restriction (because it is composed of partial derivatives on $\Pi \im(i)$), the first statement, and Proposition \ref{prop_BV_stokes}.
\end{proof}

\begin{proposition}\label{prop_A_linear_BV}
Let $\sum_i f_i\otimes a_i \in \mathfrak{h}[W] \otimes A$ be a solution to the QME. The function
\[
\hbar \log \left(\int_{\mathcal{L}_s} e^{\frac{1}{\hbar} \sum_i f_i\otimes a_i} e^{\frac{-\sigma}{2\hbar}} \right)
\]
is a solution to the QME in $\mathfrak{h}[\homology (W)]\otimes A$.
\end{proposition}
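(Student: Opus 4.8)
The plan is to verify the quantum master equation for the output function directly, by pushing the relevant operators through the integral sign. Write $M=\sum_i f_i\otimes a_i$ and let $M'=\hbar\log\int_{\mathcal{L}_s}e^{M/\hbar}e^{-\sigma/2\hbar}$, so that by the definition of the logarithm $e^{M'/\hbar}=\int_{\mathcal{L}_s}e^{M/\hbar}e^{-\sigma/2\hbar}$. Since the differential induced on $\homology(W)$ vanishes, the quantum master equation for $M'$ in $\mathfrak{h}[\homology(W)]\otimes A$ reduces to the single identity $(d_A+\hbar\Delta_{\Pi\im(i)})\,e^{M'/\hbar}=0$, where $d_A$ is the differential of $A$ and $\Delta_{\Pi\im(i)}$ is the Laplacian in the homology directions. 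I would first record that $M'$ does land in $\mathfrak{h}[\homology(W)]\otimes A$ and that the integral converges: this is the $A$-linear analogue of Proposition \ref{prop_integration_is_power_series}, following from Definition \ref{def_A_linear_integration} together with the weight bookkeeping of Proposition \ref{prop_fixed_weight_grading}.

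The main computation starts from the hypothesis that $M$ solves the quantum master equation in $\mathfrak{h}[W]\otimes A$, i.e.\ $(d+d_A+\hbar\Delta)\,e^{M/\hbar}=0$ with $\Delta=\Delta_{\Pi\im(i)}+\Delta_{\Pi\im(i)^\perp}$. Applying $\int_{\mathcal{L}_s}(-)\,e^{-\sigma/2\hbar}$ to this identity, I would treat the three summands separately. The operator $d_A$ acts only on the $A$-factor, hence commutes with both integration over $\mathcal{L}_s\subset\Pi\im(i)^\perp$ and multiplication by $e^{-\sigma/2\hbar}$, producing $d_A\,e^{M'/\hbar}$. For the $\hbar\Delta$ term, Proposition \ref{prop_laplacian_inside_integral} (extended from polynomials to the exponential by continuity in the weight filtration) yields $\int_{\mathcal{L}_s}\hbar\Delta(e^{M/\hbar})\,e^{-\sigma/2\hbar}=\hbar\Delta_{\Pi\im(i)}\,e^{M'/\hbar}$; the same proposition, combined with the fact that $\Delta_{\Pi\im(i)}$ commutes with the integral, shows moreover that the perpendicular part integrates away, i.e.\ $\int_{\mathcal{L}_s}\Delta_{\Pi\im(i)^\perp}(e^{M/\hbar})\,e^{-\sigma/2\hbar}=0$. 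Granting that the remaining $d$-term vanishes, these pieces assemble into $(d_A+\hbar\Delta_{\Pi\im(i)})\,e^{M'/\hbar}=0$, as required.

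The crux, and the step I expect to be the main obstacle, is showing $\int_{\mathcal{L}_s} d(e^{M/\hbar})\,e^{-\sigma/2\hbar}=0$. The key observations are that $\sigma$ is a function of the $\Pi\im(s)$-coordinates alone, whence both $d(e^{-\sigma/2\hbar})=0$ (because $d\sigma=0$, as $d$ annihilates these coordinates) and $\Delta_{\Pi\im(i)^\perp}(e^{-\sigma/2\hbar})=0$ (because each term of $\Delta_{\Pi\im(i)^\perp}$ differentiates a $\Pi\im(d)$-coordinate, on which $\sigma$ does not depend). I would then apply BV-Stokes (Proposition \ref{prop_BV_stokes}, in its $A$-linear exponential form) to $e^{M/\hbar}e^{-\sigma/2\hbar}$: the left-hand side $\int_{\mathcal{L}_s}\Delta_{\Pi\im(i)^\perp}(e^{M/\hbar}e^{-\sigma/2\hbar})$ is zero, and expanding the Laplacian of the product through the odd Poisson bracket of Proposition \ref{prop_BV_alg} (using $\Delta_{\Pi\im(i)^\perp}(e^{-\sigma/2\hbar})=0$) leaves exactly two terms, namely $\int_{\mathcal{L}_s}\Delta_{\Pi\im(i)^\perp}(e^{M/\hbar})\,e^{-\sigma/2\hbar}$, already seen to vanish, and the bracket term $\int_{\mathcal{L}_s}[e^{M/\hbar},e^{-\sigma/2\hbar}]$. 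Finally, because the differential $d$ coincides with the Hamiltonian derivation of a quadratic function proportional to $\sigma$ for the odd Poisson bracket of Proposition \ref{prop_BV_alg}, this bracket term is a nonzero scalar multiple of $\int_{\mathcal{L}_s} d(e^{M/\hbar})\,e^{-\sigma/2\hbar}$, forcing the $d$-term to vanish. The two genuine difficulties are identifying $d$ with this Hamiltonian derivation (a standard consequence of the compatibility of $d$ with the odd symplectic form, encoded in the dg BV-algebra axioms) and justifying the passage of the polynomial identities of Propositions \ref{prop_laplacian_inside_integral} and \ref{prop_BV_stokes} to the convergent exponential series, which I would handle uniformly via the weight filtration and $A$-linearity.
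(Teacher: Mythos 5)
Your high-level strategy (push the QME through $\int_{\mathcal{L}_s}(-)\,e^{-\sigma/2\hbar}$, treating the internal differential by BV--Stokes, the odd Poisson bracket, and the fact that $d$ is the Hamiltonian derivation of a multiple of $\sigma$) is the right one, and it is what the paper's terse proof does. But your argument, as written, rests on two intermediate claims that are false: writing $\Delta_\perp:=\Delta_{\Pi\im(i)^\perp}$, you assert separately that
\[
\int_{\mathcal{L}_s}\Delta_\perp\bigl(e^{M/\hbar}\bigr)\,e^{\frac{-\sigma}{2\hbar}}=0
\qquad\text{and}\qquad
\int_{\mathcal{L}_s}d\bigl(e^{M/\hbar}\bigr)\,e^{\frac{-\sigma}{2\hbar}}=0 .
\]
Neither holds in general; what is true is only the cancellation of their weighted sum,
\[
\int_{\mathcal{L}_s}\bigl[(d+\hbar\Delta_\perp)\,e^{M/\hbar}\bigr]\,e^{\frac{-\sigma}{2\hbar}}=0 ,
\]
which is exactly what your own (correct) ingredients prove. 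The false splitting enters through your reading of Proposition \ref{prop_laplacian_inside_integral}, in which $\Delta$ acts on the integrand alone and $e^{-\sigma/2\hbar}$ is multiplied in afterwards. Under that parenthesisation the proposition is false: on an acyclic $(1|1)$-dimensional $W$ with even coordinate $u$ on $\mathcal{L}_s$ and odd coordinate $v$ on $\Pi\im(d)$, take $F=uv$; then $F|_{\mathcal{L}_s}=0$, so the left-hand side vanishes, while $\Delta F=\pm 1$ makes the right-hand side $\pm 1$. The only reading under which that proposition is true---and the only one its stated proof via Proposition \ref{prop_BV_stokes} actually establishes---is with $\Delta$ applied to the whole product $\bigl(\sum_i f_i\otimes a_i\bigr)e^{-\sigma/2\hbar}$; the bracket correction $[\,\cdot\,,e^{-\sigma/2\hbar}]$, which you discard by quoting the proposition, is precisely the $d$-term. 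Your final assembly reaches the correct conclusion only because your two false claims err by exactly opposite amounts.

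The failure is not repaired by the QME hypothesis. Take $\homology(W)$ of dimension $(2|2)$ with Darboux coordinates $p_1,p_2$ (even), $q_1,q_2$ (odd), acyclic part of dimension $(1|1)$ with $u$ on $\mathcal{L}_s$, $v$ on $\Pi\im(d)$, and $d(v)=u$ on functions. Then (with suitable sign conventions, adjusting signs of coefficients otherwise)
\[
m=\hbar^2\bigl(q_1q_2+up_1\bigr)-\hbar^4 vq_2+\tfrac{1}{2}\hbar^4 p_1^2
\]
is an even element of $\mathfrak{h}[W]$ solving the QME, with $d(m)=-\hbar^4 uq_2\neq 0$, and
\[
\int_{\mathcal{L}_s}d\bigl(e^{m/\hbar}\bigr)\,e^{\frac{-\sigma}{2\hbar}}
=-\hbar^{3}q_2\,e^{\hbar q_1q_2+\frac{1}{2}\hbar^{3}p_1^2}\int_{\mathbb{R}}u\,e^{\hbar u p_1}\,e^{\frac{-\sigma}{2\hbar}}\mu_u
=\mp\hbar^{5}p_1q_2+O(\hbar^{6})\neq 0,
\]
because the Gaussian moment $\int_{\mathbb{R}}u^{2}e^{-\sigma/2\hbar}\mu_u=\pm\hbar$ does not vanish; the $\hbar\Delta_\perp$-term is then the negative of this, as the cancellation identity forces. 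The repair is small and recovers the intended argument: never separate the two terms. Apply BV--Stokes to the product $e^{M/\hbar}e^{-\sigma/2\hbar}$, expand via the bracket using $\Delta_\perp\bigl(e^{-\sigma/2\hbar}\bigr)=0$ and $[e^{M/\hbar},e^{-\sigma/2\hbar}]=\pm\hbar^{-1}d\bigl(e^{M/\hbar}\bigr)e^{-\sigma/2\hbar}$ to obtain the displayed cancellation, and combine it with the genuinely true commutations of $d_A$ and $\Delta_{\Pi\im(i)}$ with $\int_{\mathcal{L}_s}(-)\,e^{-\sigma/2\hbar}$ and with the QME for $M$; this yields $(d_A+\hbar\Delta_{\Pi\im(i)})\,e^{M'/\hbar}=0$ directly, and is, in substance, the paper's proof with Proposition \ref{prop_laplacian_inside_integral} read correctly.
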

\begin{proof}
The statement follows readily from Proposition \ref{prop_integration_is_power_series} and Proposition \ref{prop_laplacian_inside_integral}, noticing that the latter continues to hold in the limit.
\end{proof}

As a simple corollary of this proposition it can be seen that integrating a homotopy of MC elements in $\mathfrak{h}[W]$ leads to a homotopy of MC elements in $\mathfrak{h}[\homology (W)]$, by setting $A=\mathbb{R}[t,dt]$.

\begin{proposition}\label{prop_int_homotopy}
Given a homotopy $H(t)$ of $\eta,\nu\in\MC(\mathfrak{h}[W])$, $H^\prime (t)$ given by
\[
H^\prime(t)= \hbar \log \left(\int_{\mathcal{L}_s} e^{\frac{H(t)}{\hbar}} e^{\frac{-\sigma}{2\hbar}}\right)
\]
defines a homotopy of $\eta^\prime,\nu^\prime\in\MC (\mathfrak{h}[\homology (W)])$.\qed
\end{proposition}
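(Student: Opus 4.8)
The plan is to obtain this proposition as the corollary of Proposition \ref{prop_A_linear_BV} that was promised in the preceding remark, namely by applying that proposition with the cdga $A=\mathbb{R}[t,dt]$, and then to verify separately that the two endpoints of the resulting family $H'(t)$ are the correct integrated Maurer-Cartan elements $\eta'$ and $\nu'$. First I would recall that, by definition, a homotopy $H(t)$ between $\eta,\nu\in\MC(\mathfrak{h}[W])$ is precisely a Maurer-Cartan element of the dgla $\mathfrak{h}[W][t,dt]=\mathfrak{h}[W]\otimes\mathbb{R}[t,dt]$ satisfying $H(0)=\eta$ and $H(1)=\nu$. Writing $H(t)=\sum_i f_i\otimes a_i$ with $f_i\in\mathfrak{h}[W]$ and $a_i\in\mathbb{R}[t,dt]$, Proposition \ref{prop_A_linear_BV} applied with $A=\mathbb{R}[t,dt]$ immediately yields that $H'(t)$ is a solution to the QME, i.e.\ a Maurer-Cartan element, in $\mathfrak{h}[\homology(W)]\otimes\mathbb{R}[t,dt]=\mathfrak{h}[\homology(W)][t,dt]$. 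In particular the convergence and well-definedness of $H'(t)$ are already part of the conclusions of Propositions \ref{prop_integration_is_power_series} and \ref{prop_A_linear_BV}, so nothing further is needed on that front.

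It then remains only to identify the endpoints of $H'(t)$. The key observation is that the evaluation maps $|_0,|_1\colon\mathbb{R}[t,dt]\to\mathbb{R}$ are morphisms of cdgas acting solely on the $\mathbb{R}[t,dt]$ tensor factor, whereas both the integral $\int_{\mathcal{L}_s}$ (which is $\mathbb{R}[t,dt]$-linear by Definition \ref{def_A_linear_integration}) and the exponential and logarithm operations are built from the $\mathfrak{h}[W]$ factor together with the algebra structure. Since a cdga morphism preserves products and respects the pronilpotent filtration, it commutes with the formal power series $e^{(-)}$ and $\log$; and acting only on the coefficient factor it commutes with integration over $\mathcal{L}_s$. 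Thus I would pass the evaluation map successively through $\hbar\log$, then through $\int_{\mathcal{L}_s}$, then through the exponential, to obtain $H'(0)=\hbar\log(\int_{\mathcal{L}_s} e^{H(0)/\hbar}e^{-\sigma/2\hbar})=\hbar\log(\int_{\mathcal{L}_s} e^{\eta/\hbar}e^{-\sigma/2\hbar})=\eta'$, and identically $H'(1)=\nu'$. This exhibits $H'(t)$ as a Maurer-Cartan element of $\mathfrak{h}[\homology(W)][t,dt]$ restricting to $\eta'$ and $\nu'$ at the two endpoints, which is exactly the assertion that it is a homotopy between them.

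The only genuine point requiring care, and hence the main obstacle, is this last commutation of the evaluation maps with the integral and with the exponential and logarithm. Concretely, the hard part will be to check that under $|_0$ the second term in the expansion $e^{H(t)/\hbar}=e^{A(t)/\hbar}+\frac{1}{\hbar}e^{A(t)/\hbar}B(t)\,dt$ is annihilated, since $dt\mapsto d(0)=0$, so that $e^{H(t)/\hbar}|_0=e^{A(0)/\hbar}=e^{\eta/\hbar}$; combined with the $\mathbb{R}[t,dt]$-linearity of the integral and the continuity of the evaluation map with respect to the pronilpotent topology, this confirms the endpoints and completes the argument.
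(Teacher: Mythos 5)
Your proposal is correct and follows exactly the paper's route: the paper records this proposition without proof precisely because it is the corollary of Proposition \ref{prop_A_linear_BV} obtained by setting $A=\mathbb{R}[t,dt]$, which is your first step. Your additional verification that the evaluation maps $|_0,|_1$ commute with $\int_{\mathcal{L}_s}$, $e^{(-)}$, and $\log$ (so that $H'(0)=\eta'$ and $H'(1)=\nu'$) is the endpoint check the paper treats as immediate, and you carry it out correctly.
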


In fact, given a specific form of $H(t)$, a specific form of $H^\prime(t)$ can be given.

\begin{proposition}
Given a homotopy $H(t)=A(t)+B(t)dt$ of $\eta,\nu\in\MC(\mathfrak{h}[W])$, the induced homotopy $H^\prime (t)=A^\prime (t)+B^\prime (t)dt$ of $\eta^\prime,\nu^\prime\in\MC (\mathfrak{h}[\homology (W)])$ is given by
\[
A^\prime (t) = \hbar \log \int_{\mathcal{L}_s} e^{\frac{A(t)}{\hbar}} e^{\frac{-\sigma}{2\hbar}}\quad \text{and} \quad B^\prime (t)= e^\frac{-A^\prime (t)}{\hbar} \left(\int_{\mathcal{L}_s} e^\frac{A(t)}{\hbar} B(t) e^{\frac{-\sigma}{2\hbar}}\right).
\]
\end{proposition}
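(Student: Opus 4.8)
The plan is to reduce everything to the exponential form of $e^{H(t)/\hbar}$ already recorded in the text, combined with the $A$-linearity of the integral, and then to exploit $(dt)^2=0$ to truncate the logarithm. First I would recall that, since $dt$ is odd and $A(t)$ is even, the identity
\[
e^{\frac{H(t)}{\hbar}} = e^{\frac{A(t)}{\hbar}} + \frac{1}{\hbar}e^{\frac{A(t)}{\hbar}}B(t)dt
\]
holds in $\hat{S}W^*[|\hbar|]\otimes\mathbb{R}[t,dt]$. Applying the $A$-linear integral of Definition \ref{def_A_linear_integration} with $A=\mathbb{R}[t,dt]$, and noting that $\sigma$ (hence $e^{-\sigma/2\hbar}$) involves no $t$ or $dt$, the integral splits along the decomposition $\mathbb{R}[t,dt]=\mathbb{R}[t]\oplus\mathbb{R}[t]dt$ as
\[
\int_{\mathcal{L}_s}e^{\frac{H(t)}{\hbar}}e^{\frac{-\sigma}{2\hbar}} = P + \frac{1}{\hbar}Q\,dt,
\]
where $P = \int_{\mathcal{L}_s}e^{\frac{A(t)}{\hbar}}e^{\frac{-\sigma}{2\hbar}}$ and $Q = \int_{\mathcal{L}_s}e^{\frac{A(t)}{\hbar}}B(t)e^{\frac{-\sigma}{2\hbar}}$.

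The next step is to evaluate $H'(t)=\hbar\log\left(P + \frac{1}{\hbar}Q\,dt\right)$, using its definition from Proposition \ref{prop_int_homotopy}. By Proposition \ref{prop_integration_is_power_series} the element $P$ differs from $1$ by a pronilpotent term, so it is invertible and I may set $A'(t):=\hbar\log P$, giving $P^{-1}=e^{-A'(t)/\hbar}$. Factoring $P + \frac{1}{\hbar}Q\,dt = P\bigl(1 + \frac{1}{\hbar}P^{-1}Q\,dt\bigr)$ and using that $P$ is even, the logarithm is additive on this product:
\[
\hbar\log\left(P + \tfrac{1}{\hbar}Q\,dt\right) = \hbar\log P + \hbar\log\left(1 + \tfrac{1}{\hbar}P^{-1}Q\,dt\right).
\]
Because $(dt)^2=0$, every term of $\log\bigl(1 + \frac{1}{\hbar}P^{-1}Q\,dt\bigr)$ beyond the linear one vanishes, so the second summand is exactly $P^{-1}Q\,dt$ and the $\hbar$ cancels the $\frac{1}{\hbar}$. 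Collecting terms yields $H'(t) = A'(t) + P^{-1}Q\,dt$, which identifies $A'(t)=\hbar\log P$ and $B'(t)=P^{-1}Q=e^{-A'(t)/\hbar}Q$, precisely the claimed formulae.

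The main obstacle I anticipate is not conceptual but a matter of justifying that each manipulation—factoring out $P$, invoking additivity of $\log$, and truncating the $dt$-series—is legitimate inside the pronilpotent completion $\mathfrak{h}[\homology(W)][t,dt]$ rather than merely formally. This is secured by Proposition \ref{prop_integration_is_power_series}, which guarantees convergence and that $P$ is of the form $1+(\text{pronilpotent})$, together with the pronilpotence of $\mathfrak{h}[\homology(W)]$; once these are in place, the additivity of $\log$ on products of elements of the form $1+(\text{pronilpotent})$ applies and the identity $(dt)^2=0$ makes the truncation exact, leaving only the bookkeeping above.
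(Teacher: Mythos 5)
Your proposal is correct and is exactly the ``quick check'' the paper leaves implicit: the paper states the exponential identity $e^{H(t)/\hbar}=e^{A(t)/\hbar}+\tfrac{1}{\hbar}e^{A(t)/\hbar}B(t)dt$ immediately before the proposition, and your argument---apply the $\mathbb{R}[t,dt]$-linear integral, factor out the invertible even part $P$, and truncate the logarithm using $(dt)^2=0$---is the intended computation, with the convergence and pronilpotence points properly justified via Proposition \ref{prop_integration_is_power_series}.
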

\begin{proof}
A quick check proves the formulae are correct.
\end{proof}

\section{Prerequisites on homotopy Lie algebras}\label{sec_linf}

This section recalls those facts concerning (quantum) \Linf-algebras seen as relevant in the context of this paper, fixing both terminology and notation.

\subsection{Quantum homotopy Lie algebras}

Similar to \Linf-algebra and cyclic \Linf-algebra structures, a quantum \Linf-algebra structure is given by a MC element in a dgla.

\begin{definition}\label{def_qLinf}
Let $V$ be a super vector space equipped with an odd non-degenerate symmetric bilinear form. A quantum \Linf-algebra structure on $V$ is an even element $m(\hbar)=m_0 + \hbar m_1 + \hbar^2 m_2  + \dots \in \mathfrak{h}[\Pi V]$ that satisfies the QME. The pair $(V,m)$ is referred to as a quantum \Linf-algebra.
\end{definition}

It is important to note that a quantum \Linf-algebra structure is defined on a finite-dimensional super vector space endowed with an odd non-degenerate form by a MC element in a pseudo-compact dgla.

\begin{remark}
To elaborate on the weight grading a little, $m_0$ must be at least cubic, $m_1$ must be at least linear, and there are no restrictions on $m_i$ for all $i\geq 2$.

The restriction on the weight grading in Definition \ref{def_h[W]} used to construct $\mathfrak{h}[\Pi V]$ is a reflection of the stability condition for modular operads, cf.~\cite{getzler_kapranov_modular_operads}. Thus one can define quantum \Linf-algebras as algebras over the Feynman transform of the modular closure of the cyclic operad governing commutative algebras. This view is not used here.
\end{remark}

\begin{remark}\label{rem_recovering_defs}
Quantum \Linf-algebras can be thought of as a `higher genus' version of cyclic \Linf-algebras as follows. Given a quantum \Linf-algebra $m(\hbar)= m_0 + \hbar m_1 + \dots$, the canonical derivation associated to $m_0$ defines an odd cyclic \Linf-algebra on $V$ (see \cite{loday}) and, moreover, forgetting the cyclic structure one obtains just a (finite-dimensional $\mathbb{Z}/2\mathbb{Z}$-graded) \Linf-algebra. For more details details regarding \Linf-algebras one should consult the various literature studies: \cite{braun_laz_unimodular,chuang_laz_feynman,hamilton_laz_noncomm_geo,lada_markl}. Further, the derivation associated with $m_0$ paired with the function $m_1$ defines a unimodular \Linf-algebra structure on $V$: see \cite{braun_laz_unimodular,granaker} for details.
\end{remark}

\subsection{Minimal models of quantum homotopy Lie algebras}
Let $(V,m)$ be a quantum \Linf-algebra. Recall there exist two even super vector space morphisms $i\colon \homology (V)\to V$ and $p\colon V\to \homology (V)$ compatible with the odd symmetric forms, coming from an SDR (see Section \ref{sec_decomp}).

\begin{proposition}\label{prop_filtered_quasi_iso}
Given an SDR from $V$ to $\homology (V)$, the dgla morphism 
\[
\iota\colon \mathfrak{h}[\Pi\homology (V)]\to\mathfrak{h}[\Pi V]
\]
defined by $f\mapsto i f p$ is a filtered quasi-isomorphism and hence induces a bijection between the MC moduli sets, i.e.~the homotopy classes of quantum \Linf-algebra structures on $\mathfrak{h}[\Pi\homology (V)]$ and $\mathfrak{h}[\Pi V]$ are in bijective correspondence.
\end{proposition}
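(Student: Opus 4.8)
The plan is to prove the three assertions in turn: that $\iota$ is a morphism of dglas, that it is compatible with the weight filtration $\{F_i\}$ and induces a quasi-isomorphism, and finally that these facts together with the completeness of Proposition \ref{prop_weight_grading} force a bijection of Maurer-Cartan moduli sets. The geometric input underlying everything is that, by the orthogonality relations of Section \ref{sec_decomp}, $\Pi\im(i)$ is a symplectic subspace of $\Pi V$ with symplectic complement $\Pi\im(i)^\perp$, and that $\iota$ (written $f\mapsto ifp$) is, under the identification of functions with their representing data via the form, the pullback of functions along the symplectic projection onto $\Pi\im(i)\cong\Pi\homology(V)$; in particular $\iota(f)$ is a function that is constant along $\Pi\im(i)^\perp$.

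First I would check that $\iota$ is a dgla morphism. Compatibility with the odd Poisson bracket of Proposition \ref{prop_BV_alg} follows because $\iota(f)$ and $\iota(g)$ depend only on the $\Pi\im(i)$-coordinates, so by the splitting $\Delta=\Delta_{\Pi\im(i)}+\Delta_{\Pi\im(i)^\perp}$ of Proposition \ref{prop_laplacian_inside_integral} the component $\Delta_{\Pi\im(i)^\perp}$ annihilates them and their products; hence $[\iota f,\iota g]$ is computed entirely within the symplectic subspace and equals $\iota([f,g])$. For the differential $d+\hbar\Delta$, note that the induced internal differential on $\homology(V)$ vanishes, so the source differential is $\hbar\Delta_{\homology(V)}$; since $\Pi p$ is a chain map onto a complex with zero differential, $\iota(f)$ is $d$-closed, while $\hbar\Delta\iota(f)=\iota(\hbar\Delta_{\homology(V)}f)$ once more by the Laplacian splitting. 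Thus $\iota$ intertwines the two differentials.

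Next, for the filtered quasi-isomorphism. Since $i$ and $p$ are even and degree-preserving, $\iota$ preserves the polynomial degree $n$ and the power $g$ of $\hbar$, hence the weight $2g+n$, so $\iota(F_i)\subseteq F_i$ and in particular $\iota$ lands in $\mathfrak{h}[\Pi V]$. The crucial observation is that both $d$ (preserving $n$ and $g$) and $\hbar\Delta$ (lowering $n$ by two and raising $g$ by one) preserve the weight exactly; consequently $\mathfrak{h}[\Pi V]$ is the completed product of its finite-dimensional weight-homogeneous pieces and it suffices to prove $\iota$ is a quasi-isomorphism on each. On a fixed weight piece I would introduce the finite secondary filtration by powers of $\hbar$: its associated graded differential is simply $d$, and the SDR of Definition \ref{def_SDR} induces an SDR on the completed symmetric algebra identifying $\homology(\hat S(\Pi V)^*,d)\cong \hat S(\Pi\homology(V))^*$ with $\iota$ realising the inclusion. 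Thus $\iota$ is an isomorphism on the first page, and since the secondary filtration is finite this upgrades to a quasi-isomorphism for $d+\hbar\Delta$ on each weight piece, hence a filtered quasi-isomorphism overall.

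Finally, the bijection of moduli sets is the standard fact that a filtered quasi-isomorphism of complete, Hausdorff-filtered — equivalently pronilpotent — dglas induces a bijection of Maurer-Cartan moduli sets; completeness and Hausdorffness are exactly Proposition \ref{prop_weight_grading}. The main obstacle I anticipate is the homological step: one must verify carefully that the SDR on $V$ descends to genuine deformation-retract data on the completed symmetric algebra of functions, and that the perturbing term $\hbar\Delta$ does not obstruct the resulting quasi-isomorphism. Once the weight grading is seen to be preserved \emph{exactly} by $d+\hbar\Delta$ — reducing everything to finite-dimensional pieces carrying a finite secondary filtration — this becomes a routine spectral-sequence comparison rather than a genuine difficulty.
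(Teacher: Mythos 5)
Your proposal is correct and takes essentially the same approach as the paper: filter both dglas by the weight grading of Proposition \ref{prop_weight_grading}, show $\iota$ induces quasi-isomorphisms on the associated graded pieces $F_i/F_{i+1}$, and invoke the standard fact that a filtered quasi-isomorphism of pronilpotent dglas induces a bijection of Maurer--Cartan moduli sets. The only difference is one of detail: where the paper declares the graded quasi-isomorphisms to be clear, you justify them via the finite secondary filtration by powers of $\hbar$ (whose associated graded differential is $d$) together with the SDR, and you additionally verify that $\iota$ is a morphism of dglas, which the paper takes as given.
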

\begin{proof}
Taking the filtrations by weight grading, cf.~Proposition \ref{prop_weight_grading}, then clearly one has quasi-isomorphisms
\[
\frac{F_i\mathfrak{h}[\Pi \homology (V)]}{F_{i+1}\mathfrak{h}[\Pi \homology (V)]}\to\frac{F_i \mathfrak{h}[\Pi V]}{F_{i+1}\mathfrak{h}[\Pi V]}.
\]
The fact that filtered quasi-isomorphisms induce isomorphisms of MC moduli sets is well-known (see \cite{braun,getzler}, or the Koszul duality of \cite{laz_markl,maunder_unbased_rat_homo}).
\end{proof}

\begin{remark}
The finer notion of a filtered quasi-isomorphism is required here in order to induce an isomorphism of MC moduli sets. Two dglas which are just quasi-isomorphic may not have isomorphic MC moduli sets as show in the example below.
\end{remark}

\begin{example}
Let $\mathfrak{g}= \left\lbrace a,[a,a]:|a|=-1, da=-\frac{1}{2}[a,a]\right\rbrace$ be a dgla. Clearly $\mathfrak{g}$ is acyclic and quasi-isomorphic to the zero dgla, but $\mathcal{MC}(\mathfrak{g})=\lbrace 0,a\rbrace\neq\mathcal{MC}(0)$.
\end{example}

\begin{definition}\label{def_minimal}
Given a quantum \Linf-algebra $(V,m)$, the minimal model of $(V,m)$ is a quantum \Linf-algebra $(\homology(V),m^\prime)$ such that $\iota(m^\prime)$ is homotopic to $m$ (as a MC element in $\mathfrak{h}[\Pi V]$).
\end{definition}

\begin{remark}
Usually the minimal model for, say, an \Linf-algebra $V$ can be taken to be an \Linf-algebra on the homology $\homology(V)$ which is \emph{\Linf-quasi-isomorphic} to $V$. In the case of quantum \Linf-algebras this does not quite work due to the presence of the non-degenerate bilinear form: there is no longer an especially good notion of a quantum \Linf-map which is not an isomorphism. Therefore, given a quantum \Linf-algebra on the homology $\homology(V)$, we need to say in what sense it is equivalent to the original quantum \Linf-algebra on $V$. In the definition above this is done by extending by zero the quantum \Linf-algebra on $\homology(V)$ to all of $V$ and requiring it to be homotopic (as a Maurer-Cartan element) to the original quantum \Linf-algebra.

It should be noted that if we took this definition in the context of usual \Linf-algebras it would be equivalent to the usual definition, stated at the beginning of this remark.
\end{remark}

Proposition \ref{prop_filtered_quasi_iso} shows the existence and uniqueness up to homotopy of minimal models for quantum \Linf-algebras. However, this does not give an explicit construction or formula for the minimal model. This will be addressed in the next section.

\section{Integral formulae for minimal models}\label{sec_minimal_models}

This section contains the main result of the paper (Theorem \ref{thm_main}): the quantum \Linf-algebra $(\homology (V),m^\prime )$ given by the integral formula in Lemma \ref{lem_qBV} is proven to provide the minimal model for the original quantum \Linf-structure on $(V,m)$.

\subsection{The construction of the minimal model}

Recall from Section \ref{sec_integration} it is possible to integrate a solution to the QME to obtain a solution to the QME on homology.

Viewing integration as a morphism of sets $\rho \colon \MC (\mathfrak{h}[\Pi V])\to\MC (\mathfrak{h}[\Pi \homology (V)])$ it is a one-sided inverse to $\iota\colon \MC (\mathfrak{h}[\Pi \homology (V)])\to\MC (\mathfrak{h}[\Pi V])$, where the restriction of $\iota$ is denoted the same by an abuse of notation.

\begin{proposition}
The morphism $\rho$ is a left inverse of $\iota$. The morphism $\rho$ descends to the level of MC moduli spaces, and therefore induces the inverse bijection on the level of Maurer-Cartan moduli sets.
\end{proposition}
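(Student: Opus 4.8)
The plan is to establish the three assertions in turn, with the identity $\rho\circ\iota=\mathrm{id}$ carrying essentially all the content; the statements about moduli sets then follow formally. I would first prove $\rho\circ\iota=\mathrm{id}$ directly at the level of Maurer-Cartan elements, not merely up to homotopy. Fix $m^\prime\in\MC(\mathfrak{h}[\Pi\homology(V)])$. The crucial observation is that $\iota(m^\prime)=im^\prime p$, regarded as a function on $W=\Pi V$, depends only on the coordinates of the summand $\Pi\im(i)=H(W)$ in the decomposition $W=H(W)\oplus\Pi\im(s)\oplus\Pi\im(d)$, and there it coincides with $m^\prime$. This is read off from the SDR relations: since $pi=id$, $ps=0$, and $pd=dp$ with $\homology(V)$ carrying the zero differential (so $p(\im d)=0$), the map $p$ is the projection onto $\im(i)\cong\homology(V)$ along $\ker(p)=\im(s)\oplus\im(d)$; and the orthogonality $\im(i)^\perp=\im(s)\oplus\im(d)$ guarantees that pairing against $i(\cdots)$ detects only the $\im(i)$-component. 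In particular $\iota(m^\prime)$ is constant along the integration variables (the coordinates of $\mathcal{L}_s=\Pi\im(s)$) and along $\Pi\im(d)$.

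Granting this independence, the integral computes immediately. Because $e^{\iota(m^\prime)/\hbar}$ involves only $H(W)$-coordinates, the defining structure $(id_{H(W)}\otimes\int_{\mathcal{L}_s})$ of the integral (together with the $A$-linear extension of Definition \ref{def_A_linear_integration}) lets this factor pass outside the integration over $\mathcal{L}_s$, giving
\[
\int_{\mathcal{L}_s}e^{\frac{\iota(m^\prime)}{\hbar}}e^{\frac{-\sigma}{2\hbar}}
= e^{\frac{m^\prime}{\hbar}}\int_{\mathcal{L}_s}e^{\frac{-\sigma}{2\hbar}}
= e^{\frac{m^\prime}{\hbar}},
\]
where the last equality is the normalization built into Definition \ref{def_integral}, namely $\int_{\mathcal{L}_s}e^{-\sigma/2\hbar}=1$, and convergence is guaranteed by Proposition \ref{prop_integration_is_power_series}. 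Applying $\hbar\log$ then yields $\rho(\iota(m^\prime))=\hbar\log e^{m^\prime/\hbar}=m^\prime$, so $\rho$ is a left inverse of $\iota$ on the nose.

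For the second assertion I would invoke Proposition \ref{prop_int_homotopy}: integration carries a homotopy of Maurer-Cartan elements in $\mathfrak{h}[\Pi V]$ to a homotopy in $\mathfrak{h}[\Pi\homology(V)]$, so $\rho$ sends homotopic elements to homotopic elements and hence descends to a well-defined map $\bar\rho\colon\mathcal{MC}(\mathfrak{h}[\Pi V])\to\mathcal{MC}(\mathfrak{h}[\Pi\homology(V)])$. The third assertion is then purely formal. By Proposition \ref{prop_filtered_quasi_iso} the induced map $\bar\iota$ is a bijection of moduli sets, and the identity $\rho\circ\iota=\mathrm{id}$ descends to $\bar\rho\circ\bar\iota=\mathrm{id}$; composing with $\bar\iota^{-1}$ gives $\bar\rho=\bar\iota^{-1}$, so $\bar\rho$ is exactly the inverse bijection.

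The main obstacle is the very first step: verifying that $\iota(m^\prime)$ is genuinely independent of the integration variables. Everything downstream — pulling the exponential out of the Gaussian integral, using the normalization, and the formal moduli-set bookkeeping — is routine once this is in place. The entire substance lies in correctly deducing, from the definition $f\mapsto ifp$ together with the orthogonality relations of the SDR, that $\iota$ lands in the functions pulled back along the projection $W\twoheadrightarrow H(W)$.
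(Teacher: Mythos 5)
Your proposal is correct and follows essentially the same route as the paper: the paper dismisses $\rho\circ\iota=\mathrm{id}$ as ``an easy check'' and cites Proposition \ref{prop_int_homotopy} for the descent to moduli sets, and your argument simply fills in that easy check (that $\iota(m^\prime)$ is pulled back along $p$, hence constant in the $\mathcal{L}_s$-variables, so the normalised Gaussian integral returns $e^{m^\prime/\hbar}$) together with the formal step $\bar\rho\circ\bar\iota=\mathrm{id}$ combined with the bijectivity of $\bar\iota$ from Proposition \ref{prop_filtered_quasi_iso}.
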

\begin{proof}
It is an easy check to see that $\rho \circ \iota =id_{\MC(\mathfrak{h}[\Pi \homology (V)])}$. To prove the second statement, one recalls Proposition \ref{prop_int_homotopy}.
\end{proof}

\begin{theorem}\label{thm_main}
Given a quantum \Linf-structure $(V,m)$, the integral formula
\[
m^\prime = \rho(m) = \hbar \log \int_{ \mathcal{L}_s} e^{\frac{m}{\hbar}}e^{\frac{-\sigma}{2\hbar}}
\]
defines a quantum \Linf-algebra $(\homology(V),m^\prime)$ and, moreover, it is the minimal model of $(V,m)$.
\end{theorem}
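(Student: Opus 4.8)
The plan is to treat the two assertions of the theorem separately, since the second is essentially a formal consequence of results already established. The statement that $m'$ defines a quantum \Linf-algebra on $\homology(V)$ is already in hand: applying Lemma \ref{lem_qBV} to the odd symplectic vector space $W=\Pi V$, the solution $m$ of the QME (which is what a quantum \Linf-structure is, by Definition \ref{def_qLinf}) is carried by the integral to an element $m'\in\mathfrak{h}[\homology(\Pi V)]=\mathfrak{h}[\Pi\homology(V)]$ that again satisfies the QME. By Definition \ref{def_qLinf} this is precisely a quantum \Linf-structure on $\homology(V)$, so the first claim follows with no further computation; convergence and the power-series property needed here are provided by Proposition \ref{prop_integration_is_power_series}.

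For the minimal-model claim I would argue entirely at the level of Maurer-Cartan moduli sets. By Definition \ref{def_minimal} it suffices to show that $\iota(m')$ is homotopic to $m$ in $\mathfrak{h}[\Pi V]$, equivalently that $[\iota(\rho(m))]=[m]$ in $\mathcal{MC}(\mathfrak{h}[\Pi V])$. Here I would invoke the preceding proposition: $\rho$ is a left inverse of $\iota$ at the level of sets, $\rho\circ\iota=\mathrm{id}$, and moreover (using that integration respects homotopy, Proposition \ref{prop_int_homotopy}, together with Proposition \ref{prop_A_linear_BV}) $\rho$ descends to a map of moduli sets which is the inverse of the bijection $[\iota]$ furnished by Proposition \ref{prop_filtered_quasi_iso}.

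The key step is then the elementary observation that a left inverse of a bijection is automatically a two-sided inverse: since $[\iota]$ is a bijection whose left inverse is $[\rho]$ (inherited from $\rho\circ\iota=\mathrm{id}$), we also have $[\iota]\circ[\rho]=\mathrm{id}$. Evaluating at the class of $m$ gives $[\iota(m')]=[\iota(\rho(m))]=([\iota]\circ[\rho])([m])=[m]$, so $\iota(m')$ is homotopic to $m$ and $(\homology(V),m')$ is the minimal model, as required.

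I do not expect a genuine obstacle in the proof itself, because all of the analytic and homological content has been discharged in advance. The one point that warrants care is the direction of composition: the set-level identity $\rho\circ\iota=\mathrm{id}$ yields $[\rho]\circ[\iota]=\mathrm{id}$ immediately, whereas the minimal-model condition requires the opposite composite $[\iota]\circ[\rho]=\mathrm{id}$. Bridging these relies precisely on $[\iota]$ being a bijection, so I would be careful to cite Proposition \ref{prop_filtered_quasi_iso} at exactly that juncture rather than attempting to verify $\iota(\rho(m))=m$ on the nose, which is false in general.
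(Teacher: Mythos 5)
Your proposal is correct and follows essentially the same route as the paper: the first claim via Lemma \ref{lem_qBV}, and the minimal-model claim by combining the set-level identity $\rho\circ\iota=\mathrm{id}$ with the descent of $\rho$ to moduli sets and the bijectivity of $[\iota]$ from Proposition \ref{prop_filtered_quasi_iso}. Your explicit remark that a left inverse of a bijection is automatically two-sided is precisely the (tersely stated) content of the paper's preceding proposition, where $\rho$ is said to induce ``the inverse bijection'' on Maurer--Cartan moduli sets.
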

\begin{proof}
The first statement is a rephrasing of Lemma \ref{lem_qBV}. Next, one simply applies the preceding result to see that $\iota(m^\prime)$ is homotopic to $m$.
\end{proof}

\begin{remark}
Using the results of Appendix \ref{sec_appendix}, we can now deduce as a corollary of Theorem \ref{thm_main} the known combinatorial formulae for the minimal model in terms of stable graphs given in \cite{chuang_laz_feynman}.
\end{remark}

Ordinarily, when attempting to lift a solution $m$ to the classical master equation to a solution to the quantum master equation, one is met by a series of obstructions: one would require certain cohomology classes in $\hat{S}\Pi V^*$ to vanish. The first obstruction is just $\Delta(m)$. If this first obstruction is zero not just in cohomology but on the chain level then it turns out, by a straightforward calculation, that all higher obstructions vanish. Such an $m$, with $\Delta(m)=0$, is called harmonic. Summarising, we have the following proposition \cite{braun_laz_unimodular}.

\begin{proposition}
Let $(V,\xi)$ be an odd cyclic \Linf-algebra and let $m$ denote the Hamiltonian function associated to $\xi$. If $m$ is harmonic $(\Delta (m)=0)$, then $(V,m)$ is a quantum \Linf-algebra lifting $\xi$.\qed
\end{proposition}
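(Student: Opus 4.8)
The plan is to verify the quantum master equation (QME) directly, expanding it in powers of $\hbar$ and reading off the harmonicity hypothesis as exactly the first-order constraint, rather than appealing to any abstract obstruction theory. First I would record that the Hamiltonian $m$ associated to the odd cyclic \Linf-algebra $\xi$ is precisely the classical piece $m_0\in\hat{S}\Pi V^*$, carrying no $\hbar$-dependence, and that $m_0$ satisfies the classical master equation $dm_0+\frac{1}{2}[m_0,m_0]=0$ by hypothesis---this is what it means for $\xi$ to be a cyclic \Linf-algebra. Since $m_0$ is at least cubic, it lies in $\mathfrak{h}[\Pi V]$ as required by the weight grading of Definition \ref{def_h[W]}.

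The candidate quantum structure is then simply $m=m_0$, with all higher terms $m_i=0$ for $i\geq 1$. I would expand the QME $(d+\hbar\Delta)m+\frac{1}{2}[m,m]=0$ order by order: the coefficient of $\hbar^k$ reads
\[
dm_k+\Delta m_{k-1}+\frac{1}{2}\sum_{i+j=k}[m_i,m_j]=0,
\]
with the convention $m_{-1}=0$. The $\hbar^0$ equation is the classical master equation, which holds. The $\hbar^1$ equation collapses, upon substituting $m_1=0$, to $\Delta m_0=0$, which is exactly the harmonicity assumption.

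For every $k\geq 2$, each summand either contains some $m_i$ with $i\geq 1$ or is the term $\Delta m_{k-1}$ with $k-1\geq 1$; all of these vanish, so the equation is satisfied identically. This is the precise sense in which, once the first obstruction $\Delta m_0$ vanishes on the chain level (and not merely in cohomology), all higher obstructions vanish for free: there are no remaining terms to balance. I would conclude that $m=m_0$ solves the QME, so $(V,m)$ is a quantum \Linf-algebra, and since its classical part is $m_0$ it lifts $\xi$. There is no genuine obstacle in this argument; its content is the order-by-order bookkeeping. The single conceptual point worth stressing is that harmonicity is needed on the chain level precisely because it is the $\hbar^1$ equation that would otherwise force a correction term $m_1$ (as in the unimodular case of Section \ref{sec_master_equations})---chain-level vanishing is exactly what permits taking the lift to be $m_0$ itself with no corrections.
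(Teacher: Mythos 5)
Your proof is correct and takes essentially the same route as the paper: the proposition is stated there without a written proof (citing the preceding discussion that if the first obstruction $\Delta(m)$ vanishes on the chain level, all higher obstructions vanish ``by a straightforward calculation''), and your order-by-order expansion of the QME for the $\hbar$-independent lift $m=m_0$ is precisely that calculation. The whole content is your observation that for such an $m$ the QME splits as the classical master equation plus $\hbar\,\Delta m_0=0$, so harmonicity is exactly what allows the lift to be $m_0$ itself with no higher corrections.
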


Therefore, having a quantum lift allows one to apply Theorem \ref{thm_main}, resulting in the following.

\begin{corollary}
Let $(V,\xi)$ be an odd cyclic \Linf-algebra and let $m$ denote the Hamiltonian function associated to $\xi$. If $m$ is harmonic, then the minimal model of $(V,\xi)$ is given by $(\homology(V), X_{\rho (m)})$, where $X_f$ denotes the derivation associated to a Hamiltonian $f$.\qed
\end{corollary}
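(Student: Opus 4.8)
The plan is to realise the cyclic minimal model as the classical part of the quantum minimal model produced by Theorem \ref{thm_main}. Since $m$ is harmonic, the preceding Proposition promotes the cyclic datum $(V,\xi)$ to a genuine quantum \Linf-algebra: regarding $m=m_0$ as an $\hbar$-independent solution of the QME, $(V,m)$ is a quantum \Linf-algebra whose underlying cyclic \Linf-structure is exactly $\xi=X_m$. Concretely, for $m=m_0$ the QME reduces to the classical master equation at order $\hbar^0$ (which holds as $\xi$ is cyclic \Linf) and to $\Delta(m_0)=0$ at order $\hbar^1$ (which is the harmonicity hypothesis), with all higher orders vanishing; so this lift changes nothing at the classical level.

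First I would apply Theorem \ref{thm_main} to $(V,m)$. This yields the quantum minimal model $(\homology(V),m^\prime)$ with $m^\prime=\rho(m)$, together with a homotopy $H(t)\in\mathfrak{h}[\Pi V][t,dt]$ of Maurer-Cartan elements interpolating between $\iota(m^\prime)$ and $m$. By Remark \ref{rem_recovering_defs}, the cyclic \Linf-structure carried by the quantum \Linf-algebra $(\homology(V),m^\prime)$ is the Hamiltonian derivation associated to the classical ($\hbar^0$) part of $m^\prime$; this is the derivation denoted $X_{\rho(m)}$ in the statement. (Note that the quantum corrections in $\rho(m)$, arising in the Feynman expansion of Appendix \ref{sec_appendix} from higher-genus graphs, do not contribute to the cyclic structure, so the notation $X_{\rho(m)}$ should be read as the derivation of the $\hbar^0$-part of $\rho(m)$.)

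It then remains to verify that $(\homology(V),X_{\rho(m)})$ is the minimal model of $(V,\xi)$ in the cyclic sense, namely that the relevant classical structures are homotopic. The key observation is that the coefficient of $\hbar^0$ in the quantum master equation $(d+\hbar\Delta)h+\frac{1}{2}[h,h]=0$ is precisely the classical master equation $dh_0+\frac{1}{2}[h_0,h_0]=0$. Writing $H(t)=\sum_{i\geq 0}\hbar^i H_i(t)$, its component $H_0(t)$ is therefore a classical Maurer-Cartan element over $\mathbb{R}[t,dt]$, i.e.\ a homotopy at the cyclic level. Since $\iota(f)=ifp$ with $i,p$ both $\hbar$-independent, extraction of the $\hbar^0$ part commutes with $\iota$, so $H_0(t)$ interpolates between $\iota((m^\prime)_0)$ and $m_0=m$, exhibiting $X_{\rho(m)}$ as equivalent to $\xi$.

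The step I expect to demand the most care is this last one: confirming that the $\hbar^0$-truncation sends the quantum homotopy of Theorem \ref{thm_main} to a genuine homotopy in the classical dgla of at-least-cubic functions (the weight $>2$, $\hbar$-free part of $\mathfrak{h}[\Pi\homology(V)]$), and that this truncation is compatible with $\iota$. Once this compatibility is in hand, the corollary is a direct assembly of the preceding Proposition, Theorem \ref{thm_main}, and Remark \ref{rem_recovering_defs}.
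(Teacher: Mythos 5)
Your proposal is correct and follows exactly the route the paper intends (the corollary is stated with no written proof precisely because it is this assembly): lift $(V,\xi)$ to a quantum \Linf-algebra via harmonicity, apply Theorem \ref{thm_main}, and extract the $\hbar^0$-part, observing that the classical part of the quantum homotopy is itself a solution of the classical master equation over $\mathbb{R}[t,dt]$ compatible with $\iota$. Your explicit verification of the $\hbar^0$-truncation step is a careful spelling-out of what the paper leaves implicit, not a different argument.
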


\subsection{An inverse \Linf-morphism}

The morphism $\iota$ is a (filtered) quasi-isomorphism of dglas, and as such there exists an inverse \Linf-algebra morphism to $\iota$, meaning an \Linf-algebra morphism which induces the inverse to $\iota$ on the level of homology. To construct this morphism a preliminary result is recalled.

\begin{definition}
Let $V$ and $W$ be two dglas. An \Linf-morphism $f\colon V\to W$ is a cdga morphism $\hat{S}\Pi V^* \to \hat{S}\Pi W^*$.
\end{definition}

MC elements of dglas play a significant role in the theory of \Linf-algebras: in addition to being used to define \Linf-algebra structures, they also correspond to morphisms of pseudo-compact cdgas as shown in the following well known result (which can be extended to include \Linf-algebras, cf.\ \cite{braun_laz_homotopy_BV} for example).

\begin{proposition}\label{prop_rep_functor}
Let $V$ be a dgla and $A$ be a unital pseudo-compact cdga. The functor given by taking $A\mapsto\MC (V\otimes A)$ is represented by $\hat{S}\Pi V^*$.\qed
\end{proposition}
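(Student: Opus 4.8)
The plan is to exhibit a bijection, natural in the pseudo-compact cdga $A$, between $\MC(V\otimes A)$ and the set of (even, grading-preserving) cdga morphisms $\hat{S}\Pi V^*\to A$. The object $\hat{S}\Pi V^*$ is to be read as the Chevalley--Eilenberg cochain algebra of the dgla $V$: its differential $d_{CE}$ is the unique derivation whose restriction to the generators $\Pi V^*\subset\hat{S}\Pi V^*$ is dual to the two pieces of the dgla structure on $V$, namely a part \emph{linear} in the generators dual to the differential $d_V$, and a part \emph{quadratic} in the generators dual to the bracket $[\,\cdot\,,\,\cdot\,]$. Using the self-duality isomorphisms recorded in the conventions, $(V^*)^*\cong V$ and $(\Pi V)^*\cong\Pi V^*$ in the pseudo-compact category, together with the parity reversal $\Pi$, these two pieces will reappear below as the two terms of the Maurer--Cartan equation.

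First I would set up the underlying bijection of sets, before imposing any compatibility with differentials. By the universal property of the completed symmetric algebra in the category of pseudo-compact cdgas, an even algebra morphism $\phi\colon\hat{S}\Pi V^*\to A$ is freely and uniquely determined by its restriction to the generators, i.e.\ by an even continuous linear map $\Pi V^*\to A$. Since $\Hom(\Pi V^*,A)\cong(\Pi V^*)^*\otimes A\cong\Pi V\otimes A$, such a datum is precisely an even element of $\Pi V\otimes A$, and under the parity shift this is exactly an odd element $\mu\in V\otimes A$. Thus, forgetting differentials, even algebra morphisms $\hat{S}\Pi V^*\to A$ correspond bijectively to odd elements $\mu\in V\otimes A$.

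Next I would impose the cochain condition $\phi\circ d_{CE}=d_A\circ\phi$ that promotes an algebra morphism to a cdga morphism. Each composite is a $\phi$-derivation $\hat{S}\Pi V^*\to A$, and two such derivations out of a free (completed) commutative algebra coincide as soon as they agree on generators; hence it suffices to verify the identity on $\Pi V^*$. Unwinding it there, the linear part of $d_{CE}$ contributes the term $d\mu$ while the quadratic part contributes $\tfrac12[\mu,\mu]$, the coefficient $\tfrac12$ arising from the symmetric comultiplication dual to the bracket. Consequently $\phi$ is a cdga morphism if and only if $d\mu+\tfrac12[\mu,\mu]=0$, i.e.\ $\mu\in\MC(V\otimes A)$. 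Naturality in $A$ is then immediate, since every step is given by composition with a fixed structure map.

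The step requiring the most care is the precise bookkeeping of the parity shift $\Pi$ and the attendant signs: one must confirm that even algebra morphisms land exactly on the \emph{odd} Maurer--Cartan elements, and that dualising the bracket through $\Pi$ reproduces both the correct sign and the coefficient $\tfrac12$ in the Maurer--Cartan equation. This homological-algebra identity---that the generator-level condition is \emph{exactly} the Maurer--Cartan equation---is the real content; the compatibility of the completed tensor product and topological dual in the pseudo-compact setting is routine given the conventions already established.
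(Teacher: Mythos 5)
Your overall strategy---reading $\hat{S}\Pi V^*$ as the Chevalley--Eilenberg algebra of $V$, reducing a cdga morphism to its restriction to generators, and identifying compatibility with the differentials as the Maurer--Cartan equation---is the standard argument for this statement; note that the paper itself offers no proof at all (the proposition carries a qed-mark and is attributed to the literature), so the standard argument is indeed the right template. However, your second step contains a genuine gap: the ``universal property of the completed symmetric algebra in the category of pseudo-compact cdgas'' that you invoke does not hold. The algebra $\hat{S}\Pi V^*$ is the dual of the \emph{conilpotent} cofree cocommutative coalgebra, and it is free only with respect to maps whose values on the generators are topologically nilpotent: given an arbitrary even continuous linear map $\Pi V^*\to A$, the putative extension to an algebra map on $\hat{S}\Pi V^*$ must be evaluated on infinite series, and these sums need not converge in $A$. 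So uniqueness of the extension is fine (by density of the polynomial subalgebra), but \emph{existence} fails, and your claimed bijection between even algebra morphisms $\hat{S}\Pi V^*\to A$ and all even elements of $\Pi V\otimes A$ is not a bijection in general.

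The failure is already visible in the simplest example. Take $V=\Pi\mathbb{R}$, the abelian dgla on one odd generator with zero differential, and $A=\mathbb{R}$. Then $\MC(V\otimes A)$ consists of all odd elements of $V$, i.e.\ it is a copy of $\mathbb{R}$, while $\hat{S}\Pi V^*\cong\mathbb{R}[[x]]$ (with $x$ even, zero differential) is a local ring, so there is exactly one unital algebra morphism $\mathbb{R}[[x]]\to\mathbb{R}$, namely $x\mapsto 0$; the two sets cannot be in natural bijection. To be fair, this is a defect of the proposition as literally stated, not only of your proof: the correct well-known result (in the form found in the references the paper points to) takes $A$ to be a \emph{local} (augmented) pseudo-compact cdga with pronilpotent maximal ideal $\mathfrak{m}_A$ and represents the functor $A\mapsto\MC(V\otimes\mathfrak{m}_A)$; equivalently, one only considers algebra morphisms sending the generators $\Pi V^*$ into $\mathfrak{m}_A$, where the completed extension does converge by pronilpotence. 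With that amendment, the rest of your argument---the identification $\Hom(\Pi V^*,\mathfrak{m}_A)\cong(\Pi V\otimes\mathfrak{m}_A)$, the parity bookkeeping, and the derivation-on-generators computation producing $d\mu+\tfrac{1}{2}[\mu,\mu]=0$---goes through and is exactly the standard proof. As written, though, the convergence/locality issue is precisely the step that would fail, and any complete write-up must address it explicitly.
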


\begin{remark}\label{yoneda}
An \Linf-morphism of dglas $V\to W$ gives rise, for any unital pseudo-compact cdga $A$, to a map of sets $\MC(V\otimes A) \to \MC(W\otimes A)$ functorial in $A$, i.e.\ a natural transformation. Moreover, any such natural transformation is, by Yoneda's Lemma, equivalent to having an \Linf-morphism $(V,m_V)\to(W,m_W)$. For greater details see \cite{chuang_laz}.
\end{remark}

Therefore, if the morphism $\rho$ can be extended to include dg coefficients in a functorial manner, this is equivalent to the required \Linf-algebra morphism.

\begin{definition}
Let $A$ be a pseudo-compact cdga. The morphism of sets
\[
\tilde{\rho} \colon \MC (\mathfrak{h}[\Pi V],A)\to\MC (\mathfrak{h}[\Pi \homology (V)],A)
\]
is given by mapping $\sum_{i\in I}f_i\otimes a_i $ to the function given by
\[
\hbar\log\int_{\mathcal{L}_s} e^{\frac{1}{\hbar}\sum_{i\in I} f_i\otimes a_i} e^{\frac{-\sigma}{2\hbar}}.
\]
\end{definition}

The morphism $\tilde{\rho}$ is clearly functorial in both arguments, and by Proposition \ref{prop_A_linear_BV} it is well-defined. Further, $\tilde{\rho}$ is a one-sided inverse to $\iota$ on the level of MC sets. Let $\tilde{\iota}\colon \MC (\mathfrak{h}[\Pi \homology (V)],A)\to \MC (\mathfrak{h}[\Pi V],A)$ be the $A$-linear morphism corresponding to $\iota$, defined in the obvious way.

\begin{theorem}\label{thm_inverse}
$\tilde{\rho}$ is a left inverse of $\tilde{\iota}$. The morphism $\tilde{\rho}$ provides the inverse \Linf-morphism to $\iota$.
\end{theorem}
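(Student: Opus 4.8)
The plan is to prove the two assertions in turn, and to obtain the second from the first by invoking representability (Proposition~\ref{prop_rep_functor}) together with Yoneda's Lemma as formulated in Remark~\ref{yoneda}. Since $\tilde\rho$ and $\tilde\iota$ are by construction morphisms of the functors $A\mapsto\MC(\mathfrak{h}[\Pi V],A)$ and $A\mapsto\MC(\mathfrak{h}[\Pi\homology(V)],A)$, each natural in $A$, Yoneda converts $\tilde\rho$ into an \Linf-morphism $R\colon\mathfrak{h}[\Pi V]\to\mathfrak{h}[\Pi\homology(V)]$ and converts $\tilde\iota$ into the strict dgla morphism $\iota$ regarded as an \Linf-morphism. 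The whole content of the theorem then reduces to two points: (i) the identity $\tilde\rho\circ\tilde\iota=\mathrm{id}$ of natural transformations, and (ii) the observation that, under the Yoneda dictionary, this forces $R$ to invert $\iota$ on homology.

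For (i) I would run the $A$-linear version of the ``easy check'' already used for $\rho\circ\iota=\mathrm{id}$. Given $\sum_{i\in I}g_i\otimes a_i\in\MC(\mathfrak{h}[\Pi\homology(V)],A)$, applying $\tilde\iota$ produces $\sum_{i\in I}(ig_ip)\otimes a_i$, and the crucial point is that each function $ig_ip$ depends only on the $\Pi\im(i)=\Pi\homology(V)$ coordinates, which are complementary to the integration directions $\mathcal{L}_s=\Pi\im(s)$. Consequently the integrand of $\tilde\rho$ is constant along $\mathcal{L}_s$, so it factors out of the integral; the remaining Gaussian $\int_{\mathcal{L}_s}e^{-\sigma/2\hbar}$ cancels against the hidden normalisation, and $\hbar\log$ undoes the exponential to return $\sum_{i\in I}g_i\otimes a_i$. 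The $A$-linearity of the integral (Definition~\ref{def_A_linear_integration}) lets this computation proceed verbatim with the coefficients $a_i$ carried along, giving $\tilde\rho\circ\tilde\iota=\mathrm{id}$.

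For (ii) I would first record that $\tilde\rho$ really is natural in $A$: a cdga base-change map $A\to A'$ acts only on the tensor factor $A$, whereas the integral acts only on the $\hat S W^*$ factor, so the two commute and $\tilde\rho$ descends to a natural transformation. Under the correspondence of Remark~\ref{yoneda}, composition of natural transformations matches composition of \Linf-morphisms and identities match identities; hence the identity from (i) translates into $R\circ\iota=\mathrm{id}$ as \Linf-morphisms $\mathfrak{h}[\Pi\homology(V)]\to\mathfrak{h}[\Pi V]\to\mathfrak{h}[\Pi\homology(V)]$. Passing to linear parts, which are chain maps and compose functorially, yields $\homology(R)\circ\homology(\iota)=\mathrm{id}$ on homology. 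Because $\iota$ is a filtered quasi-isomorphism (Proposition~\ref{prop_filtered_quasi_iso}), $\homology(\iota)$ is invertible, so $\homology(R)=\homology(\iota)^{-1}$. Thus the \Linf-morphism $R$ determined by $\tilde\rho$ induces the inverse of $\iota$ on homology, which is exactly the assertion that $\tilde\rho$ provides the inverse \Linf-morphism to $\iota$.

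The conceptual weight of the argument sits entirely in the Yoneda translation, and that step is clean once naturality is in place; the computational content is the left-inverse identity (i), which is routine given the single observation that $\tilde\iota(\,\cdot\,)$ is constant along $\mathcal{L}_s$. The one place deserving genuine care is the verification that $\tilde\rho$ is natural in $A$---that is, that the integral formula commutes with arbitrary cdga maps $A\to A'$---but since the integral is defined $A$-linearly and never touches the coefficient algebra, I expect this to be immediate rather than a real obstacle.
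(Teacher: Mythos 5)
Your proposal is correct and takes essentially the same route as the paper: the paper likewise treats the left-inverse identity $\tilde{\rho}\circ\tilde{\iota}=\mathrm{id}$ as a straightforward check and deduces the second statement from Yoneda's Lemma exactly as in Remark \ref{yoneda}. Your write-up simply makes explicit the details the paper leaves implicit---the constancy of $\tilde{\iota}(\,\cdot\,)$ along $\mathcal{L}_s$ together with the normalisation $\int_{\mathcal{L}_s}e^{-\sigma/2\hbar}=1$, the naturality of $\tilde{\rho}$ in $A$, and the passage to homology using that the filtered quasi-isomorphism $\iota$ induces an isomorphism there.
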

\begin{proof}
The first statement is straightforward. Using Yoneda's Lemma, as explained in Remark \ref{yoneda}, one arrives at the proof of the second statement.
\end{proof}

\appendix

\section{Integrals as sums over graphs}\label{sec_appendix}

Formal integrals like those used throughout the paper are commonly treated using the formalism of Feynman Diagrams. More precisely, these integrals can often be written as formal series with sums taken over certain graphs. Within this appendix, a presentation as a formal series summing over stable graphs will be given for the integrals considered in this paper. The same presentation is given in \cite[Chapter 2, Section 3]{costello}. Our proof, however, is different and is a mild generalisation of the proof of the case of `usual' graphs given in \cite{etingof}. A very similar result is given in \cite[Example 3.10]{fiorenza_murri}. It should be noted, however, that the obtained formulae in this section are precisely those given in the context of minimal models for algebras over modular operads in \cite{chuang_laz_feynman}. To give the combinatorial presentation, a brief discourse to introduce the relevant material is necessary.

\subsection{Stable graphs}

Stable graphs were introduced by Ginzburg and Kapranov \cite{ginzburg_kapranov} in the context of modular operads and later used in giving formulae for minimal models by Chuang and Lazarev \cite{chuang_lazarev_dual_feynman,chuang_laz_feynman}. Here only the briefest of details will be recalled and for more details one should consult those papers cited.

\begin{definition}
A graph $G$ is given by the following data:
\begin{itemize}
\item A finite set of half edges, $\operatorname{Half}(G)$, and a finite set of vertices, $\operatorname{Vert}(G)$, with a morphism $f\colon\operatorname{Half}(G)\to \operatorname{Vert}(G)$ and an involution $\sigma\colon \operatorname{Half}(G)\to\operatorname{Half}(G)$.  
\item The set of edges, $\operatorname{Edge}(G)$, is the set of two-cycles of $\sigma$ and the legs, $\operatorname{Leg}(G)$, are the fixed points of $\sigma$.
\item For a vertex $v$ the valence is the cardinality of $f^{-1}(v)$, i.e.\ the number of half edges attached to $v$.
\end{itemize}
\end{definition}

\begin{definition}
A stable graph is a graph $G$ such that every vertex $v$ is decorated with a non-negative integer $g(v)$, called the genus of the vertex $v$, such that $2 g(v) + n(v) \geq 3$. The homology $\operatorname{H}_\bullet (-)$ of a stable graph is given by the homology of corresponding one dimensional CW-complex. The genus of a stable graph (denoted $g(G)$) is given by $\dim( \homology_1 (G) ) + \sum_{v\in \operatorname{Vert} (G)} g(v)$.
\end{definition}

\begin{definition}
Given a stable graph $G$ its Euler Characteristic $\chi (G)$ is given by the difference $\dim( \homology_0 (G) ) - g(G)$.
\end{definition}

\begin{example}
For a connected stable graph $G$ where every vertex has genus zero, one recovers the classical result for graphs $\chi (G)= |\operatorname{Vert}(G)|-|\operatorname{Edge}(G)|$. 
\end{example}

\subsection{Feynman Expansions}

The ideas behind the arguments used in this section are largely standard and closely follow those of Etingof \cite{etingof}. Indeed, the formulae of \cite{etingof} can be extracted from our formulae by restricting to those stable graphs where every vertex has genus zero.

Throughout this section $W$ is an odd symplectic vector space with a decomposition $W=\homology (W) \oplus \mathcal{L}_s \oplus \Pi \im(d)$ given by an SDR, see \ref{sec_decomp}. Recall that $W$ is forced to be finite-dimensional by the existence of the non-degenerate bilinear form and that the form $\sigma (?)=\langle ?,d? \rangle$ is non-degenerate on $\mathcal{L}_s$ and denote the inverse form on $\mathcal{L}_s^*$ by $\sigma^{-1}$. Integration over $\mathcal{L}_s$ is given by $id_{\homology (W)} \otimes \int_{\mathcal{L}_s}$.

\begin{definition}
Let $f\in\mathfrak{h}[W]$. Given a  connected stable graph $G$, the Feynman amplitude $F(G)\in (\homology(W)^*)^{\otimes \lvert \operatorname{Leg}(G) \rvert}$ is given as follows:
\begin{itemize}
\item place at every vertex with genus $i$, $j$ half edges, and $k$ legs the component in the symmetric tensor $(\mathcal{L}_s^*)^{\otimes j}\otimes (\homology(W)^*)^{\otimes k}$ of the coefficient of $h^i$ in $f$.
\item take contraction of tensors along each edge using the form $\sigma^{-1}$.
\end{itemize}
If $G$ is disconnected, then $F(G)$ is given by the product of the amplitudes given by its connected components. The empty stable graph has amplitude $1$.
\end{definition}

As is the case in all Feynman expansions, the key is Wick's Theorem.

\begin{theorem}[Wick's Theorem]\label{thm_wick}
Let $\phi_1,\dots ,\phi_{m} \in \mathcal{L}_s^*$. If $m=2k$,
\[
\int_{\mathcal{L}_s} \phi_1\dots \phi_m e^{\frac{-\sigma}{2} }  = \sum_{\mathrm{pairings}}\sigma^{-1} (\phi_{i_1},\phi_{i_2})\dots \sigma^{-1} (\phi_{i_{k-1}},\phi_{i_k}).
\]
If $m$ is odd, the integral is zero.
\end{theorem}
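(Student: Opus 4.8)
The plan is to follow the standard inductive derivation of Wick's theorem, as in \cite{etingof}, adapted to the graded setting. Both sides of the claimed identity are manifestly multilinear in $\phi_1,\dots,\phi_m$, and both are graded symmetric: the left-hand side because linear functions graded-commute in the symmetric algebra $S\overline{W}^*$, and the right-hand side because the sum over pairings is symmetric once the Koszul signs are inserted. Hence it suffices to verify the identity when each $\phi_j$ is one of the diagonalising coordinate functions $x_i$ or $\xi_i$ of Definition \ref{def_integral}.

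First I would dispose of the base cases. Since $\sigma$ is a sum of terms involving disjoint blocks of coordinates, the exponential $e^{-\sigma/2}$ factorises over these blocks and the integral of Definition \ref{def_integral} factorises accordingly into the one-dimensional even integrals and the two-dimensional odd-pair integrals. Reading off the values in Definition \ref{def_integral} (with $\hbar=1$) gives $\int_{\mathcal{L}_s} e^{-\sigma/2} = 1$, the vanishing $\int_{\mathcal{L}_s}\phi\,e^{-\sigma/2} = 0$ for a single coordinate $\phi$, and the two-point function $\int_{\mathcal{L}_s}\phi_i\phi_j\,e^{-\sigma/2} = \sigma^{-1}(\phi_i,\phi_j)$. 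In particular the odd case of the theorem ($m$ odd, which the recursion below reduces to the single-coordinate case) and the cases $m=0,2$ are immediate.

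The key step is the Wick recursion. Working in the diagonalising coordinates, I would apply the integration-by-parts identity of Proposition \ref{prop_integral_of_derivative} in the form $\int_{\mathcal{L}_s}\partial_{\phi_1}(\phi_2\cdots\phi_m\,e^{-\sigma/2}) = 0$, where $\partial_{\phi_1}$ is the partial derivative in the coordinate $\phi_1$. Expanding by the Leibniz rule, the term differentiating the Gaussian contributes a factor proportional to $\phi_1$, while the terms differentiating each $\phi_j$ detect the $\sigma$-partner of $\phi_1$; combining these and using that $\sigma^{-1}(\phi_1,\phi_j)$ vanishes unless $\phi_j$ is that partner, one obtains
\[
\int_{\mathcal{L}_s}\phi_1\cdots\phi_m\,e^{-\sigma/2} = \sum_{j=2}^m (-1)^{\varepsilon_j}\,\sigma^{-1}(\phi_1,\phi_j)\int_{\mathcal{L}_s}\phi_2\cdots\widehat{\phi_j}\cdots\phi_m\,e^{-\sigma/2},
\]
where $\varepsilon_j$ is the Koszul sign incurred in moving $\phi_1$ past $\phi_2,\dots,\phi_{j-1}$. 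This is precisely the recursion obeyed by the sum over pairings, since partitioning the pairings of $\{1,\dots,m\}$ according to the partner of the index $1$ yields the same decomposition. Induction on $m$, starting from the base cases above, then completes the proof.

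The main obstacle will be the bookkeeping of the Koszul signs: one must check that the sign $(-1)^{\varepsilon_j}$ produced by the graded integration by parts agrees with the sign attached to the corresponding pairing on the right-hand side, and that the odd coordinates (which square to zero, so may appear at most once, and contribute only when paired with their $\sigma$-partner) are handled consistently by both recursions. Everything else is the routine block factorisation and the elementary moment computations already recorded in Definition \ref{def_integral}.
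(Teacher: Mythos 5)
Your strategy is sound in outline, but it is genuinely different from the paper's proof, which is shorter precisely because it avoids your key step. After the same reduction to diagonalising coordinates, the paper derives no contraction recursion at all: since $\sigma$ is diagonal, both the integral and the sum over pairings factorise over coordinate blocks, so it suffices to treat a single even coordinate --- where one may take $\phi_1=\dots=\phi_m=x$ and the identity becomes the statement that the moments $\int x^{2n}e^{-\sigma/2}$, computed from the recursion built into Definition \ref{def_integral}, equal $\epsilon(x)^n(2n-1)!!$, which is exactly the number of pairings weighted by $\sigma^{-1}(x,x)^n$ --- and a single pair of odd coordinates, where nilpotence forces $m=2$ and both sides are declared values. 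This buys an essentially sign-free proof. Your inductive route via $\int_{\mathcal{L}_s}\partial_{\phi_1}\bigl(\phi_2\cdots\phi_m e^{-\sigma/2}\bigr)=0$ is the standard argument of \cite{etingof} and yields the mixed-product contraction identity directly, but all the cost lands in the bookkeeping you deferred. Two smaller points: the even case of Proposition \ref{prop_integral_of_derivative} is stated only for monomials $x_i^m$ in a single variable, so you need the (routine) extension to arbitrary products by integrating one coordinate block at a time; and the vanishing of $\int x_i\,e^{-\sigma/2}$ for an \emph{even} coordinate is not literally among the declared values of Definition \ref{def_integral} (which addresses only even powers), though it is forced by parity --- the paper's ``immediate'' odd case quietly relies on the same convention.

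There is, however, one place where the deferred bookkeeping is not merely a matter of signs and where, as written, your induction does not close within the paper's stated conventions: the odd pairs. Run your recursion on the base case itself. With $g=e^{\frac{1}{2\hbar}\xi_i\xi_{i+1}}=1+\tfrac{1}{2\hbar}\xi_i\xi_{i+1}$, nilpotence gives $\xi_{i+1}g=\xi_{i+1}$, hence $\partial_{\xi_{i+1}}(\xi_{i+1}g)=1$; writing $1=\bigl(1-\tfrac{1}{2\hbar}\xi_i\xi_{i+1}\bigr)g$ and using the declared values, $\int 1 = 1-\tfrac{1}{2\hbar}\int\xi_i\xi_{i+1}g$. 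So the vanishing asserted in Proposition \ref{prop_integral_of_derivative} forces $\int\xi_i\xi_{i+1}g=2\hbar$, whereas Definition \ref{def_integral} declares this integral to be $\hbar$: the two ingredients your recursion needs are mutually inconsistent by a factor of $2$. The source is that $\xi_i\xi_{i+1}$ involves two distinct coordinates, so differentiating the Gaussian produces no factor of $2$, unlike $\partial_x(x^2)=2x$ in the even sector. Consequently your recursion produces odd two-point functions $2\hbar$ against a declared base case of $\hbar$, and the induction fails on every odd pair until the conventions are reconciled --- either by renormalising the declared odd integrals to the integration-by-parts-consistent value $2\hbar$, or by building the compensating factor $\tfrac12$ into the relation between the quadratic function $\sigma$ and the bilinear form whose inverse is $\sigma^{-1}$. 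The paper's own proof never meets this tension because its odd case is reduced to $m=2$, where the theorem holds essentially by matching definitions (in effect, $\sigma^{-1}$ on an odd pair is \emph{defined} by the declared integral); your cross-check via integration by parts exposes it, so you must fix the normalisation before your argument is complete.
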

\begin{proof}
For the case $m$ is odd the result is immediate. The case $m$ is even is almost as straightforward, but requires to be broken down further: it is necessary to consider the cases of even integration and odd integration independently. For even integration it suffices to prove the result for $\phi_1=\dots=\phi_m=x$ for a canonical basis element $x$, i.e.\ one can assume $\mathcal{L}_s$ is of total dimension one. The result now readily follows from the definition of integration. For odd integration it, again, reduced to a simple case: it suffices to prove the result for a pairing of canonical odd coordinates $\xi_i$ and $\xi_j$. Since odd elements square to zero it must be the case that $m=2$ and the result is then obvious.
\end{proof}

\begin{theorem}\label{thm_disconnected_graphs}
Let $f\in\mathfrak{h}[W]$, then
\[
\int_{\mathcal{L}_s} e^{\frac{f}{\hbar}} e^{\frac{-\sigma}{2\hbar} } = \sum_{G} \hbar^{-\chi(G)} \frac{F(G)}{|\operatorname{Aut}(G)|},
\]
where the sum is over all (possibly disconnected) stable graphs.
\end{theorem}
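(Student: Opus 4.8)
The plan is to establish the identity by first treating the connected case implicitly through a standard exponential/linked-cluster argument, but the cleanest route is to prove the disconnected sum directly by expanding both the integrand and applying Wick's Theorem (Theorem \ref{thm_wick}) combinatorially. First I would expand $e^{f/\hbar} = \sum_{N=0}^\infty \frac{1}{N!\,\hbar^N} f^N$, and then expand each factor $f$ according to its decomposition into homogeneous components: writing $f = \sum_{i,j,k} f_{i,j,k}$ where $f_{i,j,k}$ is the coefficient of $\hbar^i$ lying in the symmetric tensor piece $(\mathcal{L}_s^*)^{\otimes j}\otimes(\homology(W)^*)^{\otimes k}$. Each such component corresponds precisely to a \emph{vertex} decoration: genus $i$, $j$ half-edges (legs into $\mathcal{L}_s^*$, which will be contracted), and $k$ legs landing in $\homology(W)^*$. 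Thus an ordered choice of $N$ such components is exactly a set of $N$ decorated vertices, and the $\frac{1}{N!}$ will combine with the internal symmetry to produce the automorphism factor.

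Next I would apply Wick's Theorem to the $\mathcal{L}_s^*$-valued factors. After collecting all the half-edge legs (the $\mathcal{L}_s^*$ tensor factors) from the chosen vertices, integration over $\mathcal{L}_s$ against $e^{-\sigma/(2\hbar)}$ produces, by Theorem \ref{thm_wick} (rescaled $\hbar$-linearly so that each contraction carries a factor of $\hbar$), a sum over all pairings of these half-edges, with each pairing contracted via $\sigma^{-1}$. A pairing of the half-edges is exactly the edge-data $\sigma$ of a stable graph built on the chosen vertex set; the $\homology(W)^*$-legs remain as the legs of $G$, and the contraction along edges is precisely the definition of the Feynman amplitude $F(G)$. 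Contributions where some vertex has an odd number of half-edges participating unpaired vanish, matching the graph-theoretic requirement that $\sigma$ be an honest involution pairing all non-leg half-edges.

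The main obstacle, and the step requiring genuine care, is bookkeeping the powers of $\hbar$ and the symmetry factors to recover exactly $\hbar^{-\chi(G)}/|\operatorname{Aut}(G)|$. For the $\hbar$-count: each vertex of genus $i$ contributes $\hbar^i$, the overall $\frac{1}{\hbar^N}$ from the exponential contributes $\hbar^{-|\operatorname{Vert}(G)|}$, and each Wick contraction contributes $\hbar^{+1}$ (one factor of $\hbar$ per edge), giving a net power $\hbar^{-|\operatorname{Vert}(G)| + |\operatorname{Edge}(G)| + \sum_v g(v)}$; I would then verify this exponent equals $-\chi(G) = g(G) - \dim\homology_0(G)$ using $\dim\homology_1(G) = |\operatorname{Edge}(G)| - |\operatorname{Vert}(G)| + \dim\homology_0(G)$ together with the definition $g(G) = \dim\homology_1(G) + \sum_v g(v)$. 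For the symmetry factor, the standard argument is that the number of ways to realise a fixed isomorphism class $G$ from labelled vertices and labelled pairings, divided by the $\frac{1}{N!}$ and the internal symmetric-tensor multiplicities already accounted for in the vertex components, is exactly $1/|\operatorname{Aut}(G)|$; this orbit-counting step is the delicate part and follows the argument of \cite{etingof} adapted to stable (genus-decorated) graphs, where automorphisms may now also permute half-edges at a vertex and fix the genus labelling.
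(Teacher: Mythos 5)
Your proposal is correct and follows essentially the same route as the paper's proof: decompose $f$ into components $f_{i,j,k}$ indexed by genus, number of $\mathcal{L}_s^*$-factors (half-edges) and $\homology(W)^*$-factors (legs), expand the exponential, apply Wick's Theorem to identify pairings of half-edges with stable graphs carrying the Feynman amplitude, and recover $1/|\operatorname{Aut}(G)|$ by the orbit--stabiliser argument adapted from \cite{etingof}. The only cosmetic difference is in the $\hbar$-bookkeeping: the paper rescales coordinates via $y\hbar^{1/2}=x$ so as to apply Wick's Theorem in its $\hbar$-free form, whereas you track a factor of $\hbar$ per contraction directly; both yield the exponent $|\operatorname{Edge}(G)|-|\operatorname{Vert}(G)|+\sum_v g(v)=-\chi(G)$.
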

\begin{proof}
One can write the restriction of $f$ to $\homology(W)^*\oplus \mathcal{L}_s^*$ as $\sum_{i,j,k} \frac{1}{j!k!} \hbar^i f_{i,j,k} $, where $j$ is the number of linear factors of $\mathcal{L}_s$ in $f_{i,j,k}$ and $k$ is the number of linear factors of $\homology (W)$. Substituting $y \hbar^{\frac{1}{2}}=x$ in $\mathcal{L}_s$ and after expanding $e^{\frac{f}{\hbar}}$ in terms of its Taylor expansion, one can write
\[
\int_{\mathcal{L}_s} e^{\frac{f}{\hbar}} e^{\frac{-\sigma}{2 \hbar} } = \sum_{N} Z_N,
\]
where the sum is over $N=(n_{i,j,k})$, where each $n_{i,j,k}$ is an integer and is zero if $2i+j+k < 3$, and
\[
Z_N = \bigintss_{\mathcal{L}_s} \left( \prod_{i} \prod_{j} \prod_k \frac{\hbar^{\left(i+\frac{j}{2} -1 \right) n_{i,j,k}}}{(j!k!)^{n_{i,j,k}} n_{i,j,k}!} f_{i,j,k}^{n_{i,j,k}} \right) e^{\frac{-\sigma}{2 \hbar} }.
\]
Clearly, every $f_{i,j,k}$ is a product of linear functions and therefore, using Wick's Theorem (\ref{thm_wick}), the integral for each $N$ is given combinatorially as follows: every $f_{i,j,k}$ gives a decorated flower, i.e.\ a vertex with genus $i$, $j$ half-edges, and $k$ legs. One can then choose a pairing, $p$, of half-edges of all flowers and contract using $\sigma^{-1}$ to produce a function $F_p$. Thus
\[
Z_N = \prod_{i} \prod_{j} \prod_k \frac{\hbar^{\left(i+\frac{j}{2} - 1 \right) n_{i,j,k} }}{(j!k!)^{n_{i,j,k}} (n_{i,j,k}!)} \sum_{p} F_p.
\]
A choice of pairing $p$ of half-edges can be visualised as a glueing of the two half-edges in a pair together. Thus, a gluing creates a stable graph $G$ and $F_p$ is, in fact, precisely the Feynman amplitude $F(G)$. What's more, it is clear that any stable graph with $n_{i,j,k}$ vertices of genus $i$, valence $j$, and having $k$ legs can be obtained from a pairing in this way. Since the goal is to sum over stable graphs, one must take care to deal with the redundancies arising from the fact that the same graph can be obtained in multiple ways from different pairings of half-edges. To this end, consider the permutations of half-edges that preserve decorated flowers. This group of permutations involves three parts: the permutations of flowers with a given genus, valence, and number of legs; permutations of the half-edges of a flower; and permutations of the legs of a flower. To be precise, the group is the semi-direct product
\[
\prod_i \prod_ j \prod_k S_{n_{i,j,k}} \ltimes (S_j^{n_{i,j,k}}\times S_k^{n_i,j,k}),
\]
which has order $\prod_i \prod_j \prod_k (j!k!)^{n_{i,j,k}} (n_{i,j,k}!)$. It is clear to see that the group acts transitively on all pairings of half-edges that result in a particular stable graph and the stabiliser of such a pairing is the automorphism group of the resulting graph. Therefore, the number of pairings resulting in a stable graph $G$ is given by $\frac{\prod_i \prod_j \prod_k (j!k!)^{n_{i,j,k}} (n_{i,j,k}!)}{|\operatorname{Aut}(G)|}$. Putting this all together, the result is obtained.
\end{proof}

Notice the series on the right hand side of Theorem \ref{thm_disconnected_graphs} involves arbitrary (possibly negative) powers of $\hbar$. This is to be expected since $e^{\frac{f}{\hbar}}$ has arbitrary powers of $\hbar$ and integration fixes the weight grading. These negative powers come from disjoint unions of graphs such as the one with two vertices, one edge, and four legs (two at each vertex). Taking the logarithm of the series in Theorem \ref{thm_disconnected_graphs} has the initially surprising effect of reducing the sum to over connected stable graphs, and thus multiplying by $\hbar$ results in only non-negative powers of $\hbar$.

\begin{theorem}\label{thm_connected_graphs}
\[
\hbar \log \int_{\mathcal{L}_s} e^{\frac{f}{\hbar}} e^{\frac{-\sigma}{2 \hbar} } = \sum_{G\mathrm{~connected}} \hbar^{g(G)} \frac{F(G)}{|\operatorname{Aut}(G)|},
\]
where the sum is over all connected stable graphs.
\end{theorem}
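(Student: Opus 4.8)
The plan is to derive this identity purely formally from Theorem \ref{thm_disconnected_graphs} by the standard linked-cluster (connected graph) argument: the logarithm of a sum over all stable graphs collapses to a sum over the connected ones, and the shift in the power of $\hbar$ accounts for the difference between $-\chi$ and $g$. First I would fix a set $\lbrace C_\alpha \rbrace$ of representatives for the isomorphism classes of connected stable graphs. Every (possibly disconnected) stable graph $G$ is then determined up to isomorphism by its multiset of connected components, i.e.\ by a finitely-supported function $\alpha\mapsto m_\alpha\in\mathbb{N}$ recording how many copies of each connected type $C_\alpha$ occur in $G$.

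Next I would record the three multiplicativity facts that drive the argument. First, the amplitude is multiplicative over disjoint unions, $F(G)=\prod_\alpha F(C_\alpha)^{m_\alpha}$, which is precisely how $F$ was defined on disconnected graphs. Second, the Euler characteristic is additive, $\chi(G)=\sum_\alpha m_\alpha\,\chi(C_\alpha)$, which is immediate from $\chi=\dim\homology_0-g$ since both $\dim\homology_0$ and $g$ add over disjoint unions. Third, the automorphism group factors as
\[
|\operatorname{Aut}(G)| = \prod_\alpha m_\alpha!\;|\operatorname{Aut}(C_\alpha)|^{m_\alpha},
\]
since an automorphism of $G$ may permute the $m_\alpha$ mutually isomorphic copies of $C_\alpha$ amongst themselves and, independently, act by an automorphism on each individual copy. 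Combining these, the summand of Theorem \ref{thm_disconnected_graphs} factorises as
\[
\hbar^{-\chi(G)}\frac{F(G)}{|\operatorname{Aut}(G)|} = \prod_\alpha \frac{1}{m_\alpha!}\left(\hbar^{-\chi(C_\alpha)}\frac{F(C_\alpha)}{|\operatorname{Aut}(C_\alpha)|}\right)^{m_\alpha}.
\]
Summing over all $G$, equivalently over all finitely-supported multiplicity functions $(m_\alpha)$, the sum factors into a product of exponential series, so that
\[
\int_{\mathcal{L}_s} e^{\frac{f}{\hbar}} e^{\frac{-\sigma}{2\hbar}} = \prod_\alpha \exp\left(\hbar^{-\chi(C_\alpha)}\frac{F(C_\alpha)}{|\operatorname{Aut}(C_\alpha)|}\right) = \exp\left(\sum_{C\text{ connected}}\hbar^{-\chi(C)}\frac{F(C)}{|\operatorname{Aut}(C)|}\right).
\]
Applying $\log$ and then multiplying by $\hbar$, and using that a connected graph $C$ has $\dim\homology_0(C)=1$, so that $-\chi(C)=g(C)-1$ and hence $\hbar\cdot\hbar^{-\chi(C)}=\hbar^{g(C)}$, yields exactly the claimed formula.

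The genuinely delicate point is the automorphism factorisation, and in particular the $m_\alpha!$ coming from permuting identical connected components: mishandling this combinatorial factor is precisely what would spoil the passage from the sum over all graphs to the sum over connected graphs. Beyond this, I would need to justify the formal manipulations, namely the interchange of the sum-over-graphs with the product over $\alpha$ and the convergence of the infinite product and of the exponential and logarithm. This is controlled by the filtration: each connected stable graph contributes in a definite weight grading (a fixed power of $\hbar$ together with a fixed tensor degree in $\homology(W)^*$), and by the stability condition together with Proposition \ref{prop_fixed_weight_grading} only finitely many connected graphs contribute in any given weight, so all the series and products converge in the filtration topology; the exponential and logarithm are mutually inverse on the relevant pronilpotent ideal, as recalled in Section \ref{sec_master_equations}.
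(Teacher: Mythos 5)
Your proposal is correct and follows essentially the same route as the paper's proof: both rest on the multiplicativity of $F$ over disjoint unions, the additivity of $\chi$, and the factorisation $|\operatorname{Aut}(G)|=\prod_\alpha m_\alpha!\,|\operatorname{Aut}(C_\alpha)|^{m_\alpha}$, the only cosmetic difference being that the paper exponentiates the connected series to recover Theorem \ref{thm_disconnected_graphs} whereas you factorise the disconnected sum and take the logarithm. Your additional remarks on convergence via the weight grading make explicit a point the paper leaves implicit, but do not change the argument.
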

\begin{proof}
Writing any disconnected graph as $G=G_1^{k_1} \dots G_l^{k_l}$, for non-isomorphic connected graphs $G_i$, it is clear that $F (G) =F( G_1 )^{k_1} \dots F (G_l )^{k_l}$ and $\chi (G) = k_1 \chi(G_1) + \dots + k_l \chi (G_l)$. Further, $|\operatorname{Aut}(G)|= \prod_i \left( |\operatorname{Aut}(G_i))|^{k_i} ({k_i}!) \right)$. Thus, exponentiating the series
\[
\frac{1}{\hbar} \sum_{G\mathrm{~connected}} \hbar^{g(G)} \frac{F(G)}{|\operatorname{Aut}(G)|},
\]
one arrives at the series of Theorem \ref{thm_disconnected_graphs}.
\end{proof}

\section*{Acknowledgements}

The authors would like to thank Andrey Lazarev for many useful suggestions, conversations, and for posing the project. Thanks is also owed to the useful comments provided by Paul Levy, Jim Stasheff, Ted Voronov, and the anonymous referee.

\bibliographystyle{plain}
\bibliography{my_bib}

\end{document}